\newtheorem{theorem}{Theorem}[section]
\newtheorem{proposition}[theorem]{Proposition}
\newtheorem{lemma}[theorem]{Lemma}
\newtheorem{corollary}[theorem]{Corollary}
\theoremstyle{definition}
\newtheorem{definition}[theorem]{Definition}
\newtheorem{defthm}[theorem]{Definition/Theorem}
\newtheorem{notation}[theorem]{Notation}
\newtheorem{example}[theorem]{Example}
\newtheorem{question}[theorem]{Question}
\newtheorem{remark}[theorem]{Remark}
\newtheorem{notationremark}[theorem]{Notation/Remark}
\newcommand{\Pc}{\mathcal{P}}
\newcommand{\Bc}{\mathcal{B}}
\newcommand{\Cc}{\mathcal{C}}
\newcommand{\Ck}{\mathfrak{C}}
\newcommand{\Bk}{\mathfrak{B}}
\newcommand{\cM}{\mathcal{M}}
\newcommand{\cf}{\mathbf{c}}
\newcommand{\ef}{\mathbf{e}}
\newcommand{\uf}{\mathbf{u}}
\newcommand{\ufp}{\mathbf{u}\boldsymbol{'}}
\newcommand{\vf}{\mathbf{v}}
\newcommand{\xf}{\mathbf{x}}
\newcommand{\yf}{\mathbf{y}}
\newcommand{\origin}{\mathbf{0}}
\newcommand{\NN}{\mathbb{N}} 
\newcommand{\ZZ}{\mathbb{Z}} 
\newcommand{\RR}{\mathbb{R}} 
\newcommand{\tlambda}{\tilde{\lambda}} 
\newcommand{\blambda}{\boldsymbol{\lambda}} 
\newcommand{\btlambda}{\boldsymbol{\tilde{\lambda}}} 
\newcommand{\btmu}{\boldsymbol{\tilde{\mu}}} 
\newcommand{\conv}{\mathop{\rm conv}\nolimits}
\newcommand{\supp}{\mathop{\rm supp}\nolimits}
\newcommand{\init}{\mathop{\rm in}\nolimits}
\newcommand{\sgncirc}{\overrightarrow{\mathfrak{C}}}
\newcommand{\Mpm}{\begin{bmatrix}M \ | \ -M\end{bmatrix}} 
\newcommand{\Npm}{\begin{bmatrix}N \ | \ -N\end{bmatrix}} 
\newcommand{\FMpm}{\begin{bmatrix}FM \ | \ -FM\end{bmatrix}}
\renewcommand\labelenumi{(\roman{enumi})}
\renewcommand\theenumi\labelenumi
\title[On a generalization of symmetric edge polytopes to regular matroids]{On a generalization of symmetric edge polytopes to regular matroids}
\author{Alessio D'Al\`{i}}
\address{Dipartimento di Matematica, Politecnico di Milano, Italy (current) and Department of Mathematics, University of Osnabr\"{u}ck, Germany}
\email{alessio.dali@polimi.it}
\author{Martina Juhnke-Kubitzke}
\address{Department of Mathematics, University of Osnabr\"{u}ck, Germany}
\email{juhnke-kubitzke@uni-osnabrueck.de}
\author{Melissa Koch}
\address{Department of Computer Science, University of Osnabr\"{u}ck, Germany}
\email{melikoch@uni-osnabrueck.de}
\begin{document}

\subjclass[2020]{Primary: 52B40; Secondary: 52B20, 05B35, 13P10.} 

\begin{abstract}Starting from any finite simple graph, one can build a reflexive polytope known as a symmetric edge polytope. The first goal of this paper is to show that symmetric edge polytopes are intrinsically matroidal objects: more precisely, we prove that two symmetric edge polytopes are unimodularly equivalent precisely when they share the same graphical matroid. The second goal is to show that one can construct a generalized symmetric edge polytope starting from every \emph{regular} matroid. Just like in the usual case, we are able to find combinatorial ways to describe the facets and an explicit regular unimodular triangulation of any such polytope. Finally, we show that the Ehrhart theory of the polar of a given generalized symmetric edge polytope is tightly linked to the structure of the lattice of flows of the dual regular matroid.
\end{abstract}

\maketitle

\section{Introduction}
Symmetric edge polytopes are a class of centrally symmetric reflexive lattice polytopes which has seen a lot of interest in the last few years due to their fascinating combinatorial properties \cite{HJM, OhsugiTsuchiya, OhsugiTsuchiyaNew, BraunBruegge, BraunBrueggeKahle, ChenDavisKorchevskaia, KaTo, DJKKV} and their connections to various branches of mathematics and physics \cite{ChenDavisMehta, DelucchiHoessly, CelikEtAl, DDM}.

Given a finite simple graph $G$ on vertex set $V = [n] \coloneqq \{1, 2, \ldots, n\}$, the \emph{symmetric edge polytope} associated with $G$ is the lattice polytope
\[\Pc_G \coloneqq \conv\{\pm(\ef_i - \ef_j) \mid \{i,j\} \in E(G)\} \subseteq \RR^{|V|},\]

where $\ef_i$ denotes the $i$-th standard basis vector.

\begin{figure}[h!]
\includegraphics[width=0.4\textwidth]{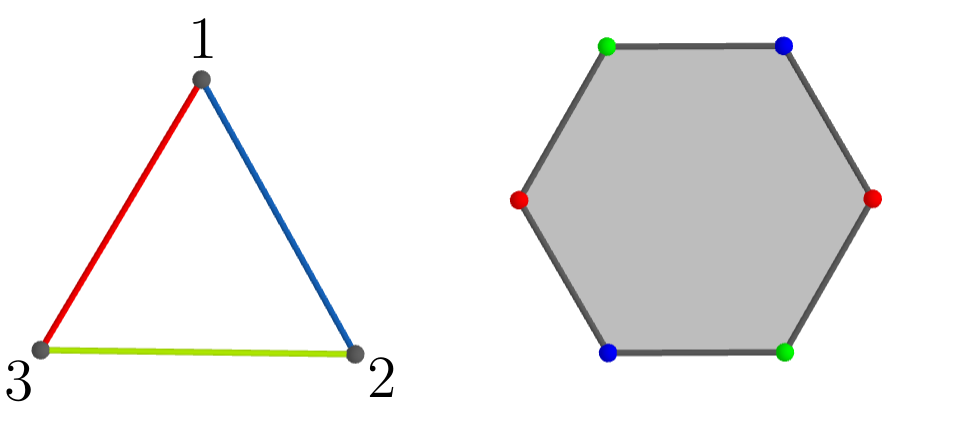}
\includegraphics[width=0.4\textwidth]{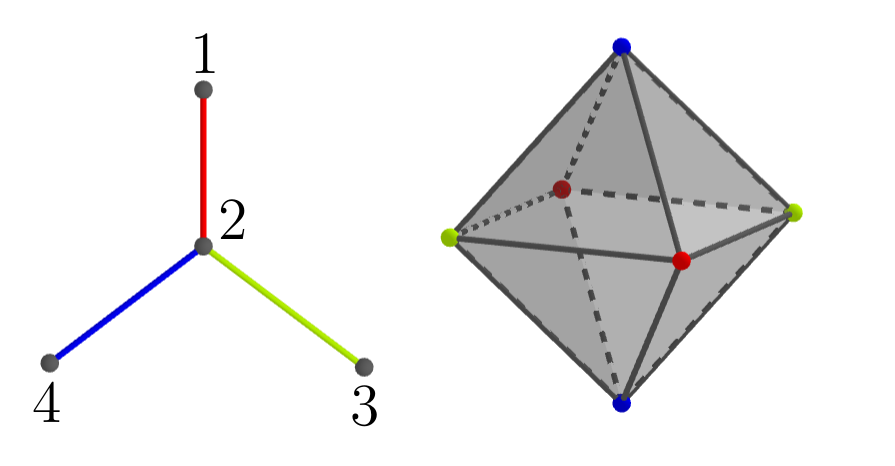}
\includegraphics[width=0.4\textwidth]{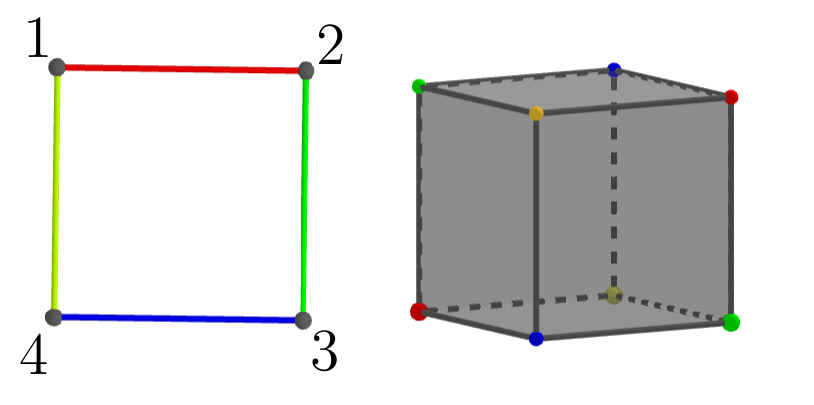}

\captionsetup{justification=centering}
\caption{Some examples of symmetric edge polytopes.\\The pictures were obtained using GeoGebra \cite{geogebra}.}
\end{figure}

Equivalently, after assigning an arbitrary orientation to each of the edges of $G$, one has that \[\Pc_G = \conv[M_G \ \! | \! \ -M_G],\]
where $M_G \in \ZZ^{|V|\times|E|}$ is the signed incidence matrix of $G$ with respect to the chosen orientation (i.e., the matrix whose $(v,e)$-entry is $1$ if $v$ is the head of $e$, $-1$ if $v$ is the tail of $e$, and $0$ otherwise). The matrix $M_G$ also serves as a representation of the \emph{graphic matroid} $\cM_G$ associated with $G$. Several objects associated with $\Pc_G$, for instance the facets or some triangulations, can be described via the combinatorial features of the graph $G$, and one can rephrase many of these characterizations in terms of the matroid $\cM_G$ only. This is not by accident; in fact, we will prove in \Cref{thm:SEPs via graphic matroids} that two symmetric edge polytopes $\Pc_G$ and $\Pc_H$ are unimodularly equivalent precisely when the graphical matroids $\cM_G$ and $\cM_H$ are isomorphic. In particular, if $G$ and $H$ are both $3$-connected, applying Whitney's 2-isomorphism theorem yields that $\Pc_G$ and $\Pc_H$ are unimodularly equivalent if and only if $G$ and $H$ are isomorphic. We remark that the characterization in \Cref{thm:SEPs via graphic matroids} corrects an erroneous statement of Matsui, Higashitani, Nagazawa, Ohsugi and Hibi \cite[Lemma 4.4, Theorem 4.5]{FirstSEP}: see \Cref{rem:unimodularly equivalent SEPs} for the details.

It is tempting to ask what happens if we take the polytope defined by the convex hull of the columns of $\Mpm$ for a more general matrix $M$, and whether this object bears any relation to the matroid represented by $M$. The former question was investigated by Ohsugi and Hibi in \cite{OhsugiHibiCS}, while the latter appears to be new. In general, changing the representation of a given matroid and applying the above ``symmetrization'' will produce wildly different polytopes, so this question might seem to be too far-fetched at first sight. However, as we show in \Cref{thm:well defined}, the construction described above does indeed yield a unique lattice polytope (up to some unimodular equivalence not involving any translation) when we consider any \emph{regular} matroid $\cM$ and restrict to its (full-rank) \emph{weakly unimodular} representations. We will call any polytope arising from such a setting a \emph{generalized symmetric edge polytope}. Throughout the paper, if we need the concrete polytope associated with a specific representation $M$, we will denote it by $\Pc_M$; if instead it is enough for our purposes to just deal with the equivalence class, we will write $\Pc_{\cM}$.

Most of the properties that make the usual symmetric edge polytopes pleasant are preserved in this wider environment: for instance, generalized symmetric edge polytopes are reflexive (as already observed in \cite{OhsugiHibiCS}) and terminal, and it is possible to describe their facets in a purely combinatorial fashion (\Cref{thm:facets as spanning 2-cuts}). We remark here that K\'alm\'an and T\'othm\'er\'esz have been working on a similar statement for extended root polytopes in the recent preprint \cite{KT_interior}.

The polars of generalized symmetric edge polytopes are special instances of \emph{Lipschitz polytopes} and enjoy a rich Ehrhart theory. More precisely, in the spirit of work by Beck and Zaslavsky \cite{BeZa}, we show that the lattice points in the $k$-th dilation of the polar $\Pc^{\Delta}_{\cM}$ are in bijection with the $(k+1)$-cuts of $\cM$ or, equivalently, with the $(k+1)$-flows of the dual matroid $\cM^*$ (\Cref{prop:polar lattice points}).

Finally, the existence of a regular unimodular triangulation for $\Pc_M$ had already been proved by Ohsugi and Hibi \cite{OhsugiHibiCS}, while an explicit one had been provided in the case of graphs by Higashitani, Jochemko and Micha{\l}ek \cite{HJM}. We show that, via a careful analysis of signed circuits, it is possible to extend the latter result to generalized symmetric edge polytopes (\Cref{thm:gb}).

The paper is organized as follows. \Cref{sec:preliminaries} contains some preliminaries about matroids, polytopes and toric ideals, while \Cref{sec:uniqueness} is devoted to define generalized symmetric edge polytopes and prove that any two full-rank weakly unimodular representations of the same regular matroid will yield unimodularly equivalent polytopes (\Cref{thm:well defined}).
\Cref{sec:properties} studies properties of generalized symmetric edge polytopes, including a partial converse to \Cref{thm:well defined}. \Cref{sec:polar} focuses on the polytopes polar to generalized symmetric edge polytopes and their Ehrhart theory; the obtained results are then used to derive a facet description for generalized symmetric edge polytopes, extending the one for the graphical case from \cite{HJM}. 
\Cref{sec:gb} is devoted to the explicit description of a regular unimodular triangulation of any generalized symmetric edge polytope.
Finally, we collect some open questions and suggestions for future work in \Cref{sec:future}.

\section{Preliminaries} \label{sec:preliminaries}

\subsection{Regular matroids, cuts and flows}
The aim of this subsection is to briefly introduce regular matroids and their properties. We direct the reader to \cite{Oxley} for a more complete treatment and for general matroid terminology.

\begin{notation}
If $\cM$ is a matroid, we will denote by $\Bk(\cM)$ and $\Ck(\cM)$ the sets of its bases and circuits, respectively. If $M$ is a matrix, we will sometimes write ``basis/circuit of $M$'' to refer to a basis/circuit of the matroid represented by $M$. In this case, the ground set of such a matroid will consist of the column indices of $M$.
\end{notation}

\begin{definition}
    Let $M \in \ZZ^{m \times n}$ be an integer matrix. We will say that $M$ is:
    \begin{itemize}
        \item \emph{totally unimodular} if the determinant of every square submatrix of $M$ lies in $\{0, \pm 1\}$;
        \item \emph{weakly unimodular} if the determinant of every square submatrix of $M$ of size $\max\{m,n\}$ lies in $\{0, \pm 1\}$. 
    \end{itemize}
\end{definition}

\begin{defthm} \label{defthm:regular}
A matroid $\cM$ of rank $r > 0$ is called \emph{regular} if it satisfies any of the following equivalent properties:
\begin{enumerate}
    \item $\cM$ can be represented via a totally unimodular matrix;
    \item $\cM$ can be represented via a full-rank weakly unimodular matrix;
    \item $\cM$ is representable over any field.
\end{enumerate}
\end{defthm}
\begin{proof}
The equivalence of (i) and (iii) is a well-known fact, see for instance \cite[Theorem 6.6.3]{Oxley}. We will now prove for clarity's sake that (i) and (ii) are equivalent: for a source in the literature, the reader can check \cite[Theorem 3.1.1]{White}.

To see that (i) implies (ii) it is enough to show that, if $\cM$ is represented by a totally unimodular matrix, then it is also represented by a totally unimodular (and hence weakly unimodular) matrix of the form $[I_r \ \! | \ \! D]$, where $r$ is the rank of $\cM$. For a proof of this claim, see \cite[Lemma 2.2.21]{Oxley}.

Let us now prove that (ii) implies (i). By assumption, there exists a full-rank matrix $M \in \ZZ^{r \times n}$ that is weakly unimodular and represents $\cM$. We now proceed as in \cite[Subsection 2.2]{SuWa}: after choosing a basis $\Bc$ for $\cM$, we can shuffle the columns of $M$ so that the elements of $\Bc$ correspond to the first $r$ columns. This amounts to multiplying $M$ on the right by an $(n \times n)$-permutation matrix $P$, an operation preserving the weakly unimodular property. Now consider the invertible submatrix $N$ of $MP$ obtained by taking the first $r$ columns. Since $MP$ is weakly unimodular and $N$ is invertible, the determinant of the integer matrix $N$ is either $1$ or $-1$; in other words, $N \in \mathrm{GL}_r(\ZZ)$. By construction, one has that $N^{-1}MP = [I_r \ \! | \! \ D]$ represents $\cM$ and is weakly unimodular; however, since it contains the identity matrix $I_r$, it must actually be totally unimodular (\cite[Lemma 3]{SuWa} or \cite[Exercise 10.1.1]{Oxley}), as desired.
\end{proof}

We illustrate the content of the previous definition with an example that will also serve as a running example throughout.

\begin{example} \label{ex:running}
    Let $\cM$ be the rank $3$ simple matroid with ground set $[5]$, bases $\Bk(\cM) = \{123, 124, 134, 135, 145, 234, 235, 245\}$, and circuits $\Ck(\cM) = \{125, 345, 1234\}$, where we are using the shorthand $i_1i_2\ldots i_m$ for $\{i_1, i_2, \ldots, i_m\}$. It is easy to check that $\cM$ is represented by the full-rank totally unimodular matrix \[M = \begin{bmatrix}1 & 0 & 0 & -1 & 1\\0 & 1 & 0 & -1 & 1\\0 & 0 & 1 & -1 & 0\end{bmatrix},\]
    and thus $\cM$ is regular. In fact, in this case $\cM$ is also graphic.
\end{example}

\begin{remark}
The assumption about the rank of $\cM$ being nonzero is not part of the usual definition of regular matroid in the literature: we include it to avoid nuisances with representability, see for instance \cite[Lemma 2.2.21]{Oxley}.
\end{remark}

The class of regular matroids (including those of rank zero) is closed under duality and contains all graphic matroids.

We now introduce cuts and flows of a regular matroid, following Su and Wagner's treatment in \cite{SuWa}.
Let $M \in \RR^{r \times n}$ (where $0 < r \leq n$) be a full-rank weakly unimodular matrix. We define the \emph{lattice of integer cuts} of $M$, denoted $\Gamma(M)$, and the \emph{lattice of integer flows} of $M$, denoted $\Lambda(M)$, as 
\begin{align*}
\Gamma(M) &\coloneqq \mathrm{row}(M) \cap \ZZ^n,\\\Lambda(M) &\coloneqq \ker(M) \cap \ZZ^n.
\end{align*}

The lattices of integer cuts and flows are orthogonal to each other with respect to the usual dot product. In particular, if $A = [I_r \ | \ D] \in \RR^{r \times n}$ (with $0 < r < n$) is totally unimodular and $A^* \coloneqq [-D^T \ | \ I_{n-r}]$, one has that
\begin{equation} \label{eq:flow-cut duality}
    \Lambda(A) = \Gamma(A^*).
\end{equation}

\begin{definition} \label{def:signed circuits}
    If $\vf \in \RR^n$, the \emph{support} of $\vf$, denoted by $\supp(\vf)$, is the set of indices $i \in [n]$ such that $v_i \neq 0$.
    
    Given a full-rank weakly unimodular matrix $M \in \RR^{r \times n}$ with $r \leq n$, we will call a flow $\blambda \in \Lambda(M)$ (respectively, a cut $\blambda \in \Gamma(M)$)
\begin{itemize}
    \item \emph{nowhere-zero} if $\lambda_i \neq 0$ for every $i \in [n]$, i.e., if $\supp(\blambda) = [n]$;
    \item a \emph{$k$-flow} (respectively, a \emph{$k$-cut}) if $|\lambda_i| < k$ for every $i \in [n]$;
    \item a \emph{signed circuit} or \emph{simple flow} if it is a $2$-flow and its support is a circuit of $M$. We denote the set of signed circuits of $M$ by $\sgncirc(M)$.
\end{itemize}
\end{definition}

For a regular matroid $\cM$, we are able to talk about \emph{the} lattice of integer cuts of $\cM$ \emph{up to isometry}: in fact, if $M$ and $M'$ are two full-rank weakly unimodular matrices representing $\cM$, then the elements of $\Gamma(M')$ correspond to elements of $\Gamma(M)$ via multiplication by a signed permutation matrix (compare, e.g.,~\cite[proof of Lemma 10]{SuWa}; their argument is stated for totally unimodular matrices, but goes through for full-rank weakly unimodular matrices as well). In particular, an element of $\Gamma(M')$ will be a nowhere-zero cut, a $k$-cut or a signed circuit if and only if the corresponding element of $\Gamma(M)$ is. Moreover, due to \eqref{eq:flow-cut duality}, all the above statements go through for flows as well.

\begin{example} \label{ex:cuts and flows}
The matrix $M$ from \Cref{ex:running} has
\begin{itemize}
    \item seventeen $2$-cuts: $(0,0,0,0,0)$, $(1,0,0,-1,1)$, $(-1,0,0,1,-1)$, $(0,1,0,-1,1)$,\\
    $(0,-1,0,1,-1)$, $(0,0,1,-1,0)$, $(0,0,-1,1,0)$, $(1,-1,0,0,0)$, $(-1,1,0,0,0)$,\\ $(1,0,-1,0,1)$, $(-1,0,1,0,-1)$, $(0,1,-1,0,1)$, $(0,-1,1,0,-1)$, $(1,-1,1,-1,0)$,\\ $(-1,1,-1,1,0)$, $(1,-1,-1,1,0)$, $(-1,1,1,-1,0)$.
    \item seven $2$-flows: $(0,0,0,0,0)$, $(1,1,0,0,-1)$, $(-1,-1,0,0,1)$, $(0,0,1,1,1)$,\\ $(0,0,-1,-1,-1)$, $(1,1,1,1,0)$, $(-1,-1,-1,-1,0)$.
    \item six signed circuits: all the $2$-flows except for the origin.
\end{itemize}
\end{example}

We record here for further reference some useful facts:

\begin{proposition}
\label{prop:useful facts on signed circuits}
Let $M$ be a full-rank weakly unimodular matrix.
\begin{enumerate}
    \item If $\blambda \in \Lambda(M)$ and $\supp(\blambda)$ is a circuit of $M$, then every coordinate of $\blambda$ has the same absolute value.
    \item If $\Cc \in \Ck(M)$, then there are exactly two signed circuits (differing by a global sign) with support $\Cc$.
\end{enumerate}
\end{proposition}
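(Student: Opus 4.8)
The plan is to derive both parts from one elementary consequence of weak unimodularity. Write $M\in\ZZ^{r\times n}$ for the matrix at hand (full rank, $r\le n$), and for a set $S$ of column indices let $M_S$ denote the submatrix of $M$ on those columns. The observation I would record first is this: if $B$ is a set of $r$ column indices with $M_B$ invertible, then $\det(M_B)=\pm1$ — it is nonzero because the columns are independent and lies in $\{0,\pm1\}$ because $M$ is weakly unimodular — so, expanding any column $M_j$ in the basis $\{M_b\}_{b\in B}$ of $\RR^r$ via Cramer's rule, every coefficient equals $\pm1$ times a maximal minor of $M$ divided by $\det(M_B)=\pm1$; since permuting columns changes a determinant only by a sign, each such coefficient lies in $\{0,\pm1\}$.

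For part (i): given $\blambda\in\Lambda(M)$ with $\supp(\blambda)=\Cc$ a circuit, I would fix $j\in\Cc$. Because $\Cc$ is a circuit, $\Cc\setminus\{j\}$ is independent, hence extends to a set $B$ of $r$ independent columns. Rearranging the dependency $\sum_{i\in\Cc}\lambda_iM_i=\origin$ gives $M_j=\sum_{i\in\Cc\setminus\{j\}}(-\lambda_i/\lambda_j)M_i$, and as $\Cc\setminus\{j\}\subseteq B$ this is exactly the (unique) expansion of $M_j$ in the basis $\{M_b\}_{b\in B}$. By the observation above, each coefficient $-\lambda_i/\lambda_j$ lies in $\{0,\pm1\}$, and since $\lambda_i\ne0$ for $i\in\Cc$ we get $|\lambda_i|=|\lambda_j|$. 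As $j$ was arbitrary, all coordinates of $\blambda$ indexed by $\Cc=\supp(\blambda)$ share a common absolute value.

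For part (ii): since $\Cc$ is a circuit, $\operatorname{rank}(M_\Cc)=|\Cc|-1$, so $\ker(M_\Cc)$ is a line; being defined over $\QQ$, it is spanned by a primitive integer vector, which has full support on $\Cc$ because no proper subset of $\Cc$ is dependent. Extending by zeros yields $\blambda\in\Lambda(M)$ with $\supp(\blambda)=\Cc$, and by part (i) its coordinates on $\Cc$ all have a common absolute value, which must be $1$ by primitivity; hence $\blambda$, and likewise $-\blambda$, is a signed circuit with support $\Cc$. Conversely, any signed circuit $\bmu$ with $\supp(\bmu)=\Cc$ restricts on $\Cc$ to a vector in the line $\ker(M_\Cc)$, so $\bmu=q\blambda$ with $q\in\ZZ$ (integrality of $\bmu$ together with primitivity of $\blambda$); the $2$-flow bound $|\mu_i|<2$ forces $|q|<2$, so $q=\pm1$, and the signed circuits supported on $\Cc$ are precisely $\blambda$ and $-\blambda$.

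I do not expect a genuine obstacle here: the argument is essentially classical matroid linear algebra, and most of the work is bookkeeping. The two places that need a little care are keeping track of the sign ambiguity when a maximal minor of $M$ appears only after permuting columns (so that ``$\pm$ a maximal minor'' is still in $\{0,\pm1\}$), and the passage between the real line $\ker(M_\Cc)$ and a primitive integral generator of it, which is what pins the common absolute value to exactly $1$ and the scalar $q$ to $\pm1$.
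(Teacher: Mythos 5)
Your proof is correct and essentially unpacks the references that the paper itself delegates to: Sturmfels' Lemma 4.9 for part (i) is precisely the Cramer's-rule observation you make (the coefficients expanding a column in a column basis are ratios of maximal minors, hence lie in $\{0,\pm 1\}$ under weak unimodularity), and Su--Wagner's Lemma 8 for part (ii) likewise rests on $\ker(M_{\Cc})$ being a line. You also correctly handle the one point that needs care, namely that part (i) fixes only a common absolute value, with primitivity of the integral generator needed to pin it to $1$ and to bound the scalar $q$ by $\pm 1$.
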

\begin{proof}
Part (i) can be derived directly from \cite[Lemma 4.9]{Stu} and constitutes a strengthening of \cite[Lemma 7]{SuWa}. The proof of part (ii) is almost verbatim the same as the one of \cite[Lemma 8]{SuWa}, using part (i) instead of \cite[Lemma 7]{SuWa}.
\end{proof}

\begin{definition}
    Let $\cM$ be a regular matroid with ground set $E$. If $\Bc$ is a basis of $\cM$ and $e \in E \setminus \Bc$, then the \emph{fundamental circuit of $e$ with respect to $\Bc$} is the unique circuit $\Cc(e,\Bc) \in \Ck(\cM)$ contained in $\Bc \cup \{e\}$. Note that $e \in \Cc(e,\Bc)$.
    
    If, moreover, $M$ is a full-rank weakly unimodular representation of $\cM$, then by \Cref{prop:useful facts on signed circuits} there is a unique signed circuit $\overrightarrow{\Cc}(e,\Bc) \in \Lambda(M)$ supported at $\Cc(e,\Bc)$ whose $e$-th entry equals $1$. We will call such a signed circuit the \emph{fundamental signed circuit of $e$} with respect to $\Bc$ and $M$.
\end{definition}

\begin{example}
    Let $\cM$ and $M$ be as in \Cref{ex:running}. Then, for $\Bc = \{1,2,3\}$, one has that $\overrightarrow{\Cc}(4,\Bc) = (1,1,1,1,0)$ and $\overrightarrow{\Cc}(5,\Bc) = (-1,-1,0,0,1)$.
\end{example}

\begin{lemma} \label{lem:unique cut}
Let $M \in \RR^{r \times n}$ (where $0 < r \leq n$) be a full-rank weakly unimodular matrix and assume that the first $r$ columns of $M$ are linearly independent. Then, for any $a_1, \ldots, a_r \in \ZZ$, there exist unique $a_{r+1}, \ldots, a_n \in \ZZ$ such that $(a_1, \ldots, a_n) \in \Gamma(M)$. In other words, there exists a unique cut $\boldsymbol{\gamma} \in \Gamma(M)$ having $a_1, \ldots, a_r$ as its first $r$ entries.
\end{lemma}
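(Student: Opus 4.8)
The plan is to pass from $M$ to the ``standard form'' $[I_r \mid D]$, in which the statement becomes transparent. First I would let $N \in \RR^{r\times r}$ be the submatrix of $M$ consisting of the first $r$ columns. By hypothesis $N$ is invertible, and since $N$ is an invertible maximal square submatrix of the weakly unimodular matrix $M$, we get $\det N = \pm 1$. Set $A \coloneqq N^{-1}M$. Left-multiplication by the invertible matrix $N^{-1}$ does not change the row space, so $\mathrm{row}(A) = \mathrm{row}(M)$ and hence $\Gamma(A) = \mathrm{row}(A)\cap\ZZ^n = \mathrm{row}(M)\cap\ZZ^n = \Gamma(M)$. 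Moreover $A$ has the shape $[I_r \mid D]$ for some matrix $D$, and for every $r$-subset $S$ of columns one has $\det A_S = (\det N)^{-1}\det M_S = \pm\det M_S \in \{0,\pm 1\}$, so $A$ is again weakly unimodular; since $A$ contains $I_r$, the fact already used in the proof of \Cref{defthm:regular} (\cite[Lemma 3]{SuWa} or \cite[Exercise 10.1.1]{Oxley}) shows $A$ is totally unimodular. In particular $D$ is an integer matrix.

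With this reduction in hand the lemma is immediate. A vector lies in $\mathrm{row}(A)$ if and only if it equals $[\,\uf \mid \uf D\,]$ for some $\uf \in \RR^r$, where by $[\,\uf \mid \uf D\,]$ I mean the vector whose first $r$ entries form $\uf$ and whose last $n-r$ entries form $\uf D$. Since $D$ is integral, $[\,\uf \mid \uf D\,] \in \ZZ^n$ exactly when $\uf \in \ZZ^r$. Therefore
\[
\Gamma(M) = \Gamma(A) = \{\, [\,\uf \mid \uf D\,] : \uf \in \ZZ^r \,\},
\]
so given any $a_1, \ldots, a_r \in \ZZ$ there is exactly one element of $\Gamma(M)$ whose first $r$ entries are $a_1, \ldots, a_r$, namely $[\,(a_1, \ldots, a_r) \mid (a_1, \ldots, a_r)D\,]$; the required (and unique) integers $a_{r+1}, \ldots, a_n$ are the entries of $(a_1, \ldots, a_r)D$.

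I do not expect any real obstacle: the only non-formal ingredients are the passage to the normal form $[I_r \mid D]$ and the resulting integrality of $D$, both of which already appear in the proof of \Cref{defthm:regular}. The only points needing a little care are recording that $\Gamma$ is unaffected by left-multiplication by $N^{-1}$ (because $N^{-1} \in \mathrm{GL}_r$, so the row space is preserved) and keeping the row-vector/column-vector bookkeeping straight.
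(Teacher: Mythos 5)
Your proof is correct, and it takes a genuinely different (and arguably cleaner) route than the paper. The paper's argument works directly with $M$: it builds $\boldsymbol{\gamma}$ by imposing orthogonality to the fundamental signed circuits $\overrightarrow{\Cc}(j,[r])$ for $j>r$, then verifies $\boldsymbol{\gamma}\in\mathrm{row}(M)$ by noting that these circuits span $\ker(M)$, and proves uniqueness by the same duality. Your proof instead normalizes $M$ to the standard form $A=N^{-1}M=[I_r\mid D]$, observes that $\Gamma$ is invariant under this left-multiplication, uses the weak-unimodularity-plus-$I_r$-implies-totally-unimodular fact (already invoked in the paper's \Cref{defthm:regular}) to get $D$ integral, and then simply parametrizes $\Gamma(A)=\{[\uf\mid\uf D]:\uf\in\ZZ^r\}$, from which existence and uniqueness drop out immediately. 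The two arguments are two faces of the same coin --- the $j$-th fundamental signed circuit of $[I_r\mid D]$ with respect to $[r]$ is essentially $(-D\ef_j,\ef_j)$, so the paper's orthogonality conditions reduce exactly to your equation $a_{r+j}=(a_1,\dots,a_r)D\ef_j$ --- but yours bypasses the cut/flow orthogonality machinery and the discussion of fundamental circuits entirely, at the small cost of explicitly carrying out the normal-form reduction. Each has its merit: the paper's version foregrounds the matroidal objects used throughout the article, while yours is more self-contained linear algebra.
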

\begin{proof}
Call $\cM$ the regular matroid represented by $M$.

If $r=n$, then $\Gamma(M) = \ZZ^n$ and the claim is true. Assume now that $r < n$.

To prove existence, define $\boldsymbol{\gamma} \in \ZZ^n$ in the following way:
\begin{itemize}
    \item $\gamma_i = a_i$ for every $i \in [r]$;
    \item for every $j \in \{r+1, \ldots, n\}$, we determine $\gamma_j$ by imposing that $\boldsymbol{\gamma} \cdot \overrightarrow{\Cc}(j, [r]) = 0$, where $\overrightarrow{\Cc}(j, [r])$ is the fundamental signed circuit of $j$ with respect to the basis $[r]$ of $\cM$ and the representation $M$.
    \end{itemize}
    To prove that the integer vector $\boldsymbol{\gamma} \in \ZZ^n$ we have just defined is indeed a cut, it is enough to show that $\boldsymbol{\gamma} \in \mathrm{row}(M)$; but since $\mathrm{row}(M)$ and $\ker(M)$ are orthogonal with respect to the standard dot product, this amounts to proving that $\boldsymbol{\gamma} \cdot \vf = 0$ for every $\vf \in \ker(M)$. Since the fundamental signed circuits of $M$ with respect to $[r]$ form an $\RR$-basis of $\ker(M)$ (being $n-r$ many linearly independent vectors by construction), the claim follows.

To prove uniqueness, assume there is another cut $\boldsymbol{\gamma'}$ such that $\gamma'_i = a_i$ for every $i \in [r]$, and consider $\boldsymbol{\beta} \coloneqq \boldsymbol{\gamma'} - \boldsymbol{\gamma} \in \Gamma(M)$. By assumption, $\beta_i = 0$ for every $i \in [r].$ Since $\mathrm{row}(M) = \ker(M)^{\perp}$, it follows that $\boldsymbol{\beta} \cdot \boldsymbol{\lambda} = 0$ for every $\boldsymbol{\lambda} \in \ker(M)$. In particular, for every $j \in \{r+1, \ldots, n\}$, one has that $\boldsymbol{\beta} \cdot \overrightarrow{\Cc}(j, [r]) = 0$, and thus $\beta_j = 0$. This proves that $\boldsymbol{\gamma'} = \boldsymbol{\gamma}$.
\end{proof}

Given a matroid $\cM$, denote by $\cM^{\circ}$ the matroid obtained from $\cM$ by deleting all its loops. Su and Wagner proved in \cite{SuWa} that knowing the lattice of integer cuts of $\cM$ is enough to determine $\cM^{\circ}$ up to isomorphism. In particular, if we know beforehand that $\cM$ is loopless (for instance, if $\cM$ is simple), we can reconstruct completely $\cM$ from the data of its lattice of integer cuts. This idea will serve as a blueprint for the constructions in this paper.

\subsection{Polarity}
Let $P \subseteq \RR^d$ be a full-dimensional lattice polytope with $\origin \in \mathring{P} \cap \ZZ^d$ (here $\mathring{P}$ denotes the interior of $P$ with respect to the Euclidean topology). 
We recall that the \emph{polar} of $P$ is the polytope 
\[P^{\Delta} \coloneqq \{\uf \in \RR^d \mid \uf \cdot \xf \leq 1 \text{ for every }\xf \in P\},\]

where we are using the usual dot product to identify $\RR^d$ and its dual $(\RR^d)^*$.

The polar $P^{\Delta}$ will not be a lattice polytope in general. If $P^{\Delta}$ happens to be a lattice polytope, then $P$ is called \emph{reflexive}.

For the rest of this subsection we fix a full-dimensional reflexive polytope $P \subseteq \RR^d$ with $\mathring{P} \cap \ZZ^d = \{\origin\}$.

\begin{notationremark} \label{not:F_u}
If $\uf \in P^{\Delta}\cap \ZZ^d$, we denote by $F_{\uf}$ the face of $P$ obtained as $\conv\{\vf_i \mid \vf_i \cdot \uf = 1\}$, where the $\vf_i$'s are the vertices of $P$. 
Indeed, the polytope $P$ lies entirely inside one of the halfspaces defined by the hyperplane $H_{\uf} \coloneqq \{\xf \in \RR^d \mid \uf \cdot \xf = 1\}$.

    By polarity, facets of the polytope $P$ correspond to the vertices of the polar polytope $P^{\Delta}$; in particular, any facet of $P$ will be of the form $F_{\uf}$ for some $\uf \in P^{\Delta} \cap \ZZ^d$, and such a $\uf$ will be a vertex of $P^{\Delta}$. 
\end{notationremark}

\subsection{Toric ideals}
We introduce some basic notation about toric ideals. For the concepts not explained here and to get further insight, see for instance \cite[Chapters 4 and 8]{Stu}.

\begin{definition} \label{def:toric circuits}
If $M \in \ZZ^{r \times n}$ is an integer matrix with columns $\vf_1, \ldots, \vf_n$, we will denote by $I_M$ the \emph{toric ideal} associated with $M$, i.e.~the kernel of the map 
\[
\begin{aligned}
\pi\colon\  K[x_1, \ldots, x_n] &\to K[t_1^{\pm 1}, \ldots, t_r^{\pm 1}]\\
x_i &\mapsto \mathbf{t}^{\vf_i} \coloneqq t_1^{m_{1,i}}t_2^{m_{2,i}}\ldots t_r^{m_{r,i}},
\end{aligned}
\]
where $K$ is a field. Every $\blambda \in \ZZ^n$ can be uniquely written as $\blambda^+ - \blambda^-$, where $\blambda^+$ and $\blambda^-$ are in $\NN^n$ and have disjoint supports. Any column vector $\blambda \in \ker(M) \cap \ZZ^n$ gives rise to a binomial $\xf^{\blambda^+} - \xf^{\blambda^-} \in \ker(\pi)$, and the ideal $I_M$ is generated by binomials of this form. In what follows, with a slight abuse of notation, we will use the expression ``signed circuit'' to denote both an element $\boldsymbol{\lambda} \in \{0, \pm 1\}^n$ as in \Cref{def:signed circuits} and the associated binomial $\xf^{\blambda^+} - \xf^{\blambda^-}$ in $\ker(\pi)$.
\end{definition}

\begin{remark} \label{rem:unimodular toric}
When $M$ is full-rank weakly unimodular, the toric ideal $I_M$ is remarkably well-behaved: in fact, the set $\sgncirc(M)$ of signed circuits is a universal Gr\"obner basis for $I_M$ (and hence, in particular, the signed circuits of $M$ generate $I_M$).
Actually, an even stronger result is true, as $\sgncirc(M)$ turns out to be the \emph{Graver basis} of $I_M$ \cite[Theorem 8.11]{Stu}. (In fact, since two signed circuits only differing by a global sign give rise to the same binomial up to sign, one usually picks a representative for every pair; in particular, the Graver basis of $I_M$ will have cardinality $|\Ck(M)| = \frac{1}{2}|\sgncirc(M)|$.)

\end{remark}

\begin{example}
    Let $M$ be as in \Cref{ex:running}. The enumeration of signed circuits in \Cref{ex:cuts and flows} shows that the polynomials $x_1x_2-x_5$, $x_3x_4x_5-1$ and $x_1x_2x_3x_4 - 1$ are the Graver basis (and a universal Gr\"obner basis) of the toric ideal $I_M$.
\end{example}

Throughout the paper, when we say that a certain polynomial inside a polynomial ring is homogeneous, we are using the standard grading: i.e.,~each variable has degree $1$. We record here for further reference a useful observation.

\begin{lemma} \label{lem:toric homogenization}
Let $B \in \ZZ^{m \times n}$ and let $B' \in \ZZ^{(m+1) \times (n+1)}$ be the matrix defined via \[b'_{ij} \coloneqq \begin{cases}b_{ij} & \text{if }i\leq m \text{ and }j \leq n \\0 & \text{if }i\leq m \text{ and }j = n+1 \\ 1 & \text{if }i=m+1 \end{cases}.\] Let $I_B \subseteq K[x_1, \ldots, x_n]$ and $I_{B'} \subseteq K[x_1, \ldots, x_n, z]$ be the respective toric ideals (here $x_i$ corresponds to the $i$-th column and $z$ to the $(n+1)$-st, when available). Then $I_{B'} = I_B^{\mathrm{hom}}$, where the homogenization is taken with respect to the variable $z$.
\end{lemma}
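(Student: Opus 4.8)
The plan is to prove the two inclusions $I_{B'} \subseteq I_B^{\mathrm{hom}}$ and $I_B^{\mathrm{hom}} \subseteq I_{B'}$ separately, relying on the fact that both sides are toric ideals (hence generated by binomials) and that $B'$ is just $B$ with an appended row of ones and an extra column that is zero except for that last row. The key structural observation is that the kernel of $B'$ is precisely $\{(\blambda, -|\blambda|_1^{\text{signed}}) : \blambda \in \ker B\}$ in the appropriate sense: if $(\bmu, c) \in \ZZ^{n+1}$ lies in $\ker B'$, then the first $m$ rows force $\bmu \in \ker B$ (since the extra column is zero in those rows), and the last row (all ones) forces $c = -\sum_{i=1}^n \mu_i$. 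Conversely every $\bmu \in \ker B$ extends uniquely to such an element of $\ker B'$. This gives a clean bijection between lattice-point kernels which I will then translate into binomials.

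First I would show $I_{B'} \subseteq I_B^{\mathrm{hom}}$. Take a binomial generator $\xf^{\bmu^+}z^{c^+} - \xf^{\bmu^-}z^{c^-}$ of $I_{B'}$ coming from $(\bmu, c) \in \ker B'$. By the observation above, $\bmu \in \ker B$ and $c = -\sum_i \mu_i$, so $c^+ - c^- = -\sum \mu_i = (\sum \mu_i^-) - (\sum \mu_i^+)$. Since exactly one of $c^+, c^-$ is nonzero, a short case check (according to the sign of $\sum \mu_i$) shows this binomial is exactly the $z$-homogenization of the binomial $\xf^{\bmu^+} - \xf^{\bmu^-} \in I_B$: the total $z$-degree inserted on each side is precisely what is needed to equalize the degrees $|\bmu^+|_1$ and $|\bmu^-|_1$. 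Hence each generator of $I_{B'}$ lies in $I_B^{\mathrm{hom}}$, and since $I_B^{\mathrm{hom}}$ is an ideal, $I_{B'} \subseteq I_B^{\mathrm{hom}}$.

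For the reverse inclusion, I would use that $I_B^{\mathrm{hom}}$ is generated (as an ideal in $K[x_1,\dots,x_n,z]$) by the homogenizations $f^{\mathrm{hom}}$ of a generating set $\{f\}$ of $I_B$; here one may take the $f$ to be the binomial generators $\xf^{\blambda^+} - \xf^{\blambda^-}$ with $\blambda \in \ker B$. The homogenization of such a binomial with respect to $z$ is, up to the sign bookkeeping above, the binomial attached to $(\blambda, -\sum_i \lambda_i) \in \ker B'$, which lies in $I_{B'}$. Therefore $I_B^{\mathrm{hom}} \subseteq I_{B'}$, completing the proof. I would be slightly careful here about one standard subtlety: homogenization of an ideal is generated by homogenizations of a \emph{generating set} only when that generating set is a Gröbner basis with respect to a suitable order, but for toric ideals one can instead argue directly that $I_{B'}$ is a homogeneous prime ideal containing $I_B^{\mathrm{hom}}$ and of the same dimension, or simply invoke that both $I_{B'}$ and $I_B^{\mathrm{hom}}$ are prime of the same Krull dimension $n$ and one contains the other — forcing equality.

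The step I expect to require the most care is exactly this last point: matching $I_{B'}$ with $I_B^{\mathrm{hom}}$ rather than with the ideal generated by $z$-homogenizations of an arbitrary generating set. The cleanest route is probably to note that $I_{B'}$ is a homogeneous ideal (every binomial generator is $z$-homogeneous by construction of $B'$, since the last row of ones records total degree), that it contains $I_B$ after setting $z=1$ in the obvious sense, and that $I_{B'}$ is prime of dimension $n$ while $I_B^{\mathrm{hom}}$ is the unique smallest homogeneous ideal whose dehomogenization is $I_B$; one then checks the dehomogenization of $I_{B'}$ is $I_B$ and concludes by the universal property of homogenization. All of this is routine once the kernel description of $B'$ is in place, so I would state that description as the first lemma-internal claim and let everything else follow from it.
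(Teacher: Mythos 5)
Your proposal is correct in substance, but for the second inclusion it departs from the paper's route, and you also leave a couple of loose ends. For $I_{B'} \subseteq I_B^{\mathrm{hom}}$ you use the kernel description of $B'$ (the last column forces $c = -\sum_i \mu_i$) and check that each binomial generator of $I_{B'}$ is exactly the $z$-homogenization of the corresponding binomial in $I_B$; this is essentially what the paper does, except the paper first restricts to the Graver basis and uses primitivity to conclude that $z$ appears on at most one side. Your version shows that primitivity is not actually needed here, since the decomposition $\boldsymbol{\mu}^{\pm}$, $c^{\pm}$ already has disjoint supports for any kernel vector — a small simplification. For $I_B^{\mathrm{hom}} \subseteq I_{B'}$ the paper homogenizes a Gr\"obner basis of $I_B$ with respect to a graded order (invoking the standard homogenization theorem, and the fact that the Graver basis contains the universal Gr\"obner basis), then observes that each homogenized primitive binomial lies in $I_{B'}$. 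You instead propose an abstract argument: both ideals are prime (toric ideals are prime, and homogenization of a prime is prime), one is contained in the other by the first half, and they have equal Krull dimension, hence they coincide. This is a genuinely different and valid route, provided the dimension count is done correctly. Be aware of two imprecisions: you write that $I_{B'}$ ``contains'' $I_B^{\mathrm{hom}}$, but the containment you actually established is $I_{B'} \subseteq I_B^{\mathrm{hom}}$ (you do state it correctly a line later); and the common Krull dimension is $\mathrm{rank}(B) + 1$, not $n$ — appending the last column makes $\mathrm{rank}(B') = \mathrm{rank}(B)+1$, and $\dim K[\mathbf{x},z]/I_B^{\mathrm{hom}} = \dim K[\mathbf{x}]/I_B + 1 = \mathrm{rank}(B) + 1$ as well, so the dimensions match, but neither equals $n$ in general. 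One final remark: your initial hesitation about homogenizing generators was well-founded as a general principle, but for the specific generating set $\{\mathbf{x}^{\boldsymbol{\lambda}^+} - \mathbf{x}^{\boldsymbol{\lambda}^-} : \boldsymbol{\lambda} \in \ker B\}$ the concern is moot: this set contains the universal Gr\"obner basis of $I_B$, hence is itself a Gr\"obner basis for every graded order, so your first plan would have gone through directly and would have coincided more closely with the paper's argument. Either way, what you sketched is sound once the dimension count is fixed.
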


\begin{proof}
Let us first prove that $I_{B'} \subseteq I_B^{\mathrm{hom}}$. The toric ideal $I_{B'}$ is generated by the set of its primitive binomials, i.e.~its Graver basis. Let $f$ be a primitive binomial of $I_{B'}$. Due to primitivity, the variable $z$ can appear at most on one side of the binomial; without of loss of generality, we can hence write $f = \xf^{\blambda^+} - \xf^{\blambda^-}z^k$, where $\blambda = \blambda^+ - \blambda^- \in \ZZ^n$, $k \geq 0$ and $(\blambda, k) \in \ker(B')$. By construction, $\blambda \in \ker(B)$ and $f$ is homogeneous; more precisely, $f$ is the homogenization of a binomial in $I_B$ with respect to the variable $z$. It follows that $I_{B'} \subseteq I_B^{\mathrm{hom}}$.

Let us now prove that $I_B^{\mathrm{hom}} \subseteq I_{B'}$. By \cite[Theorem 8.4.4]{CLO}, in order to find a generating set for $I_B^{\mathrm{hom}}$ it is enough to homogenize a set of polynomials forming a Gr\"obner basis of $I_B$ with respect to a graded monomial order. Primitive polynomials provide such a set: in fact, the Graver basis of $I_B$ contains the universal Gr\"obner basis of $I_B$ \cite[Lemma 4.6]{Stu}. Let $g = \xf^{\blambda^+} - \xf^{\blambda^-}$ be a primitive binomial in $I_B$. We can assume without loss of generality that $k \coloneqq |\blambda^+| - |\blambda^-| \geq 0$. By construction, the homogenized polynomial $\xf^{\blambda^+} - \xf^{\blambda^-}z^k$ lies in $I_{B'}$. This shows that $I_B^{\mathrm{hom}} \subseteq I_{B'}$.
\end{proof}

\section{Uniqueness up to unimodular equivalence} \label{sec:uniqueness}
The main aim of this section is to describe how to extend the definition of a symmetric edge polytope from the context of graphs to that of regular matroids. Let us first fix some notation:

\begin{notation} \label{not:PM and psiF}
For any integer matrix $M \in \ZZ^{r \times n}$ with $0 < r \leq n$, we denote by $\Pc_M$ the lattice polytope of $\RR^r$ obtained as $\conv\begin{bmatrix}M \ | \ -M\end{bmatrix}$.

For $F \in \mathrm{GL}_r(\ZZ)$, we denote by $\psi_F: \mathbb{R}^r \to \mathbb{R}^r$ the affine map sending $\xf$ to $F\xf$.
\end{notation}

 It is our goal to show that any two full-rank weakly unimodular representations of a regular matroid $\cM$ produce the same lattice polytope (in the sense specified in \Cref{not:PM and psiF}) up to unimodular equivalence, and the same holds for the polytopes obtained via polarity. More precisely, we show the following:

\begin{theorem} \label{thm:well defined}
Let $\cM$ be a regular matroid of rank $r > 0$ on $n$ elements and let $M_1, M_2 \in \mathbb{R}^{r \times n}$ be two full-rank weakly unimodular representations of $\cM$. Then there exists $F\in \mathrm{GL}_r(\ZZ)$ such that \[\Pc_{M_2} = \psi_F(\Pc_{M_1}) \quad \text{and} \quad \Pc_{M_2}^{\Delta} = \psi_{(F^T)^{-1}}(\Pc_{M_1}^{\Delta}).\]
\end{theorem}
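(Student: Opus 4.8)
The plan is to reduce everything to understanding how two full-rank weakly unimodular representations of the same regular matroid $\cM$ differ. The key structural fact, already invoked in the discussion following \Cref{def:signed circuits}, is that if $M_1$ and $M_2$ both represent $\cM$ and are full-rank weakly unimodular of the same size $r \times n$, then they differ by left multiplication by an element of $\mathrm{GL}_r(\ZZ)$ together with a signed permutation of the columns; that is, $M_2 = F M_1 S$ for some $F \in \mathrm{GL}_r(\ZZ)$ and some signed permutation matrix $S \in \ZZ^{n \times n}$ (a permutation matrix with some columns negated). I would first establish (or cite) this: since $M_1, M_2$ represent the same matroid, the column matroids coincide, and over $\QQ$ representations of a matroid are unique up to the natural projective action; the weak unimodularity pins the scalars down to signs, yielding the signed permutation $S$ and then $F = M_2 S^{-1} M_1^{+}$ lands in $\mathrm{GL}_r(\ZZ)$ by a determinant-of-a-basis argument exactly as in the proof of \Cref{defthm:regular}(ii)$\Rightarrow$(i).

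Next I would check that the two elementary moves — left multiplication by $F \in \mathrm{GL}_r(\ZZ)$, and right multiplication by a signed permutation $S$ — each have the desired effect on $\Pc_M$ and $\Pc_M^\Delta$, and then compose. For right multiplication: the columns of $\begin{bmatrix}M S \mid -MS\end{bmatrix}$ are, as a set, exactly the columns of $\begin{bmatrix}M \mid -M\end{bmatrix}$ — negating a column $\vf_i$ just swaps it with $-\vf_i$, which is already present, and permuting columns does nothing to the convex hull. Hence $\Pc_{MS} = \Pc_M$ on the nose, and consequently $\Pc_{MS}^\Delta = \Pc_M^\Delta$. For left multiplication: $\begin{bmatrix}FM \mid -FM\end{bmatrix} = F\begin{bmatrix}M \mid -M\end{bmatrix}$, so $\Pc_{FM} = F \cdot \Pc_M = \psi_F(\Pc_M)$ directly from the definition in \Cref{not:PM and psiF}. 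For the polar, I would use the standard fact that for an invertible linear map $\psi_F$ one has $(\psi_F(P))^\Delta = \psi_{(F^T)^{-1}}(P^\Delta)$: indeed $\uf \cdot (F\xf) \le 1$ for all $\xf \in P$ iff $(F^T \uf)\cdot \xf \le 1$ for all $\xf \in P$ iff $F^T\uf \in P^\Delta$ iff $\uf \in (F^T)^{-1}(P^\Delta)$. (One should note in passing that $\origin$ lies in the interior of $\Pc_{M_1}$ since $M_1$ has full rank $r$, so that polarity behaves well; this is where full-rankedness is used again.) Composing, with $M_2 = F M_1 S$, gives $\Pc_{M_2} = \Pc_{FM_1 S} = \psi_F(\Pc_{M_1 S}) = \psi_F(\Pc_{M_1})$ and likewise $\Pc_{M_2}^\Delta = \psi_{(F^T)^{-1}}(\Pc_{M_1}^\Delta)$, which is exactly the claim.

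The main obstacle is the first step: rigorously justifying that two full-rank weakly unimodular representations of the same regular matroid differ by $\mathrm{GL}_r(\ZZ)$ on the left and a signed permutation on the right. Uniqueness of matroid representations up to projective equivalence is classical over a field, but one must be a little careful that the scaling factors (one per column, plus a global change of basis) can be chosen so that weak unimodularity forces the column scalars into $\{\pm 1\}$ and the basis-change matrix into $\mathrm{GL}_r(\ZZ)$ rather than just $\mathrm{GL}_r(\QQ)$. The cleanest route is probably to reduce both $M_1$ and $M_2$ to the standard form $[I_r \mid D]$ with respect to a common basis $\Bc$ of $\cM$ (as in the proof of \Cref{defthm:regular}), argue that the totally unimodular matrix $D$ is then determined by $\cM$ up to signs of rows and columns (this is essentially the uniqueness statement for totally unimodular representations, e.g. via the cited argument in \cite{SuWa}), and then unwind the two base changes and the column permutation. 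Once that structural input is in hand, everything else is the short, purely formal computation sketched above.
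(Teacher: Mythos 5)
Your plan is essentially the paper's: first establish that $M_2 = F M_1 S$ for some $F \in \mathrm{GL}_r(\ZZ)$ and some signed permutation matrix $S$; then observe that right multiplication by $S$ leaves $\Pc_M$ unchanged (since $[M \mid -M]$ already contains both $\pm$ of every column) while left multiplication by $F$ acts as $\psi_F$; and finish with the formal polar computation. All of these reductions, and the polar calculation, are correct. The one genuine issue is the first justification you offer for the structural fact: it is \emph{not} true that over $\QQ$ (or over an arbitrary field) representations of a matroid are unique up to projective equivalence — already $U_{2,4}$ has infinitely many projectively inequivalent rational representations, and the same is true of many other matroids. Unique representability is a theorem \emph{about regular (more generally, binary) matroids}, not a general background fact, so it is precisely what must be proved here rather than cited as ambient knowledge. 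The ``cleanest route'' you sketch at the end is the correct one and is exactly what the paper does: after reducing both $M_1$ and $M_2$ to the form $[I_r \mid D_i]$ over a common basis of $\cM$, it uses uniqueness of binary representations (\cite[Proposition 6.4.1]{Oxley}) to conclude $D_1 \equiv D_2 \pmod 2$, and then applies Camion's signing lemma (\cite[Lemma 10.1.7]{Oxley}) to upgrade this congruence to an equality up to row and column sign changes, after which $F$ is read off as a product of the basis-change matrices $N_i$ (each in $\mathrm{GL}_r(\ZZ)$ by weak unimodularity) and the diagonal $\pm 1$ row-sign matrix. Fleshing out that step, and dropping the misleading appeal to projective uniqueness over $\QQ$, turns your proposal into a complete proof.
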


We will show how to handle the case when the matrices representing $\cM$ are not full-rank in \Cref{rem:not full rank}. Moreover, a partial converse to \Cref{thm:well defined} will be proved later, see \Cref{thm:same simple matroid}.

\begin{proof}
Pick two weakly unimodular full-rank $(r \times n)$-matrices $M_1$ and $M_2$ both representing $\cM$. For each $i \in \{1,2\}$, write $\Pc_i \coloneqq \Pc_{M_i}$. Multiplying $M_i$ on the right by a (signed) permutation matrix has no effect on the polytope $\Pc_i$: permuting the columns just permutes the list $L$ of points we are taking the convex hull of, and changing the sign of a column is harmless because the list $L$ consists of the columns of both $M_i$ and $-M_i$. After some permutation of the columns of $M_1$ and $M_2$, we can hence assume without loss of generality the following two statements:
\begin{itemize}
    \item the identity map $[n] \to [n]$ yields an isomorphism between the matroids represented by $M_1$ and $M_2$;
    \item the submatrices $N_1$ and $N_2$ obtained by selecting the first $r$ columns of respectively $M_1$ and $M_2$ are both invertible.
\end{itemize}

Proceeding as in the proof of ``(ii) implies (i)'' in \Cref{defthm:regular}, we can now multiply each $M_i$ on the left by $N_i^{-1} \in \mathrm{GL}_r(\ZZ)$, obtaining the totally unimodular matrix $[I_r \ \! | \! \ D_i]$. Since the identity map still yields an isomorphism between the matroids represented by $[I_r \ \! | \! \ D_1]$ and $[I_r \ \! | \! \ D_2]$, we can apply \cite[Proposition 6.4.1]{Oxley} to get that $D_1$ and $D_2$ are congruent modulo 2, and hence so are $[I_r \ \! | \! \ D_1]$ and $[I_r \ \! | \! \ D_2]$. Since $[I_r \ \! | \! \ D_1]$ and $[I_r \ \! | \! \ D_2]$ are both totally unimodular, we are now in the position to use Camion's signing lemma \cite[Lemma 10.1.7]{Oxley}: i.e., we can obtain the matrix $[I_r \ \! | \! \ D_2]$ by changing the signs of some rows and columns of $[I_r \ \! | \! \ D_1]$. In other words, there exist diagonal matrices $R \in \mathrm{GL}_r(\ZZ)$ and $C \in \mathrm{GL}_n(\ZZ)$ with only $1$'s and $-1$'s on the diagonal and such that $[I_r \ \! | \! \ D_2] = R \cdot [I_r \ \! | \! \ D_1] \cdot C$.

Now let $F \coloneqq N_2 \cdot R \cdot N_1^{-1} \in \mathrm{GL}_r(\ZZ)$. It follows from the discussion above that $\Pc_2 = \psi_F(\Pc_1)$, as desired (note that $C$, being a signed permutation matrix, does not enter the picture).

The polar statement can now be derived like this:
\[
\begin{split}
    \Pc_2^{\Delta} &= \{\uf \in \RR^r \mid \uf \cdot \xf \leq 1 \text{ for every }\xf \in \Pc_2\}\\
    &= \{\uf \in \RR^r \mid \uf \cdot \xf \leq 1 \text{ for every }\xf \in \psi_F(\Pc_1)\}\\
    &= \{\uf \in \RR^r \mid \uf \cdot F\yf \leq 1 \text{ for every }\yf \in \Pc_1\}\\
    &= \{\uf \in \RR^r \mid F^T\uf \cdot \yf \leq 1 \text{ for every }\yf \in \Pc_1\}\\
    &= \{(F^T)^{-1}\vf \in \RR^r \mid \vf \cdot \yf \leq 1 \text{ for every }\yf \in \Pc_1\}\\
    &= \psi_{(F^T)^{-1}}(\Pc_1^{\Delta}).
\end{split}
\]
\end{proof}

\begin{example} \label{ex:unimodular equivalence}
Let $\cM$ be the uniform matroid $U_{2,3}$. The two full-rank totally unimodular matrices \[M_1 \coloneqq \begin{bmatrix}1 & 0 & 1\\ 0 & 1 & 1 \end{bmatrix} \text{ and } M_2 \coloneqq \begin{bmatrix}1 & 0 & 1\\ 0 & 1 & -1 \end{bmatrix}\] both represent $\cM$. Changing the signs of both the second row and the second column of $M_1$ yields $M_2$; in formulas,
\[\begin{bmatrix}1 & 0 & 1\\ 0 & 1 & -1\end{bmatrix} = \begin{bmatrix}1 & 0\\ 0 & -1 \end{bmatrix}\begin{bmatrix}1 & 0 & 1\\ 0 & 1 & 1\end{bmatrix}\begin{bmatrix}1 & 0 & 0\\ 0 & -1 & 0 \\ 0 & 0 & 1\end{bmatrix}.\]
It is easy to verify that the two polytopes $\Pc_1$ and $\Pc_2$ are unimodularly equivalent, as guaranteed by \Cref{thm:well defined}.
\end{example}

The usual symmetric edge polytope associated with a graph $G$ is defined as $\Pc_{A_G}$, where $A_G$ is any signed incidence matrix associated with $G$. The matrix $A_G$ provides a totally unimodular representation of the graphic matroid $\cM_G$, but is \emph{not} full-rank. However, this is not really an issue, as we now explain.

\begin{remark} \label{rem:not full rank}
Let $\cM$ be a regular matroid of rank $r > 0$ and let $M \in \ZZ^{m \times n}$ be a totally unimodular representation of $\cM$ with $m > r$. Possibly after permuting the columns of $M$, we can assume without loss of generality that the first $r$ columns of $M$ are linearly independent. Pivoting repeatedly we can then reach a matrix \[M' \coloneqq \begin{bmatrix}I_r & D \\ \mathbf{0}_{m-r, r} & \mathbf{0}_{m-r, n-r}\end{bmatrix},\] which will again be totally unimodular by \cite[Lemma 2.2.20]{Oxley}.

The two polytopes $\Pc_M$ and $\Pc_{M'}$ are unimodularly equivalent, and projecting onto the first $r$ coordinates shows that $\Pc_{M'}$ is in turn unimodularly equivalent to $\Pc_{M''}$, where $M'' \coloneqq \begin{bmatrix}I_r \ | \ D\end{bmatrix}$ is a full-rank totally unimodular representation of $\cM$.
\end{remark}

\begin{example}\label{ex:3-cycle}
Let $G = C_3$ be the cycle graph on three vertices and pick \[A_G \coloneqq \begin{bmatrix}1 & 1 & 0 \\ -1 & 0 & -1 \\ 0 & -1 & 1\end{bmatrix}.\]
Then $\Pc_{A_G}$ is the symmetric edge polytope $\Pc_G$. Successive row operations on $A_G$ yield that
\[\begin{bmatrix}1 & -1 & 0 \\ 0 & 1 & 0 \\ 0 & 1 & 1\end{bmatrix} \begin{bmatrix}1 & 0 & 0 \\ 1 & 1 & 0 \\ 0 & 0 & 1\end{bmatrix} \begin{bmatrix}1 & 1 & 0\\ -1 & 0 & -1 \\ 0 & -1 & 1\end{bmatrix} = \begin{bmatrix}1 & 0 & 1 \\ 0 & 1 & -1 \\ 0 & 0 & 0\end{bmatrix},\]
and so $\Pc_G$ is unimodularly equivalent to the full-dimensional polytope $\Pc_{M_2} \subseteq \RR^2$, with $M_2$ as in \Cref{ex:unimodular equivalence}.

\begin{remark} \label{rem:TU full-rank for G}
Selecting a directed spanning tree inside a connected finite simple graph $G$ on $r+1$ vertices and $n$ edges yields an explicit full-rank totally unimodular representation for $\mathcal{M}_G$ in the following way (compare \cite[Section 5.1, p.~137]{Oxley}):
\begin{itemize}
    \item fix an orientation for each edge of $G$;
    \item pick a spanning tree $\mathcal{T}$ for $G$ and number its edges from $1$ to $r$;
    \item assign the $i$-th standard basis vector $\ef_i \in \mathbb{R}^r$ to the $i$-th edge in $\mathcal{T}$ (taken with the orientation selected at the beginning);
    \item for any edge $\vec{e}$ in $G$ taken with its orientation, consider the unique directed path $\mathcal{P}_{\vec{e}}$ from the starting vertex to the ending vertex that only uses edges of $\mathcal{T}$;
    \item assign to $\mathcal{P}_{\vec{e}}$ the vector $\vf_{\vec{e}} = (\lambda_1, \ldots, \lambda_r)$, where $\lambda_i$ equals $1$ if the $i$-th edge of $\mathcal{T}$ appears in $\mathcal{P}_{\vec{e}}$ with its ``correct'' orientation, $-1$ if it is traversed backwards, $0$ if it does not appear at all.
\end{itemize}
Putting together all the vectors $\vf_{\vec{e}}$ as columns of a matrix yields a full-rank totally unimodular matrix $[I_r \ \! \mid \ \! D]$ representing $\mathcal{M}_G$. By the results in this section, this also produces a full-dimensional polytope $\Pc_{[I_r \ \! \mid \ \! D]} \subseteq \RR^r$ unimodularly equivalent to the symmetric edge polytope of $G$ (compare this to \Cref{ex:3-cycle}). If the graph $G$ is not connected, one can select a directed spanning tree for each connected component and argue analogously.
\end{remark}
\end{example}

\section{First properties of generalized symmetric edge polytopes} \label{sec:properties}

Due to \Cref{thm:well defined}, if we are given a regular matroid $\cM$ of rank $r>0$ on $n$ elements, we know how to define a full-dimensional polytope $\Pc_{\cM} \subseteq \RR^r$ which is defined up to some unimodular equivalence not involving any translation. We now wish to prove some results about $\Pc_{\cM}$: in the proofs we will often need to fix a specific full-rank totally unimodular representation of $\cM$.

We begin by noting that the polytope $\Pc_{\cM}$ does not see potential loops or parallel elements inside the matroid $\cM$, in analogy to the usual symmetric edge polytopes (see \cite[Remark 60]{DDM}).

\begin{remark} \label{rem:simplification}
Let $\cM$ be a regular matroid of rank $r>0$ and let $M$ be a full-rank weakly unimodular matrix representing $\cM$. Let $\overline{M}$ be the submatrix of $M$ obtained by keeping only the nonzero columns $\cf_i$ such that $\cf_i \neq \pm\cf_j$ for every $j<i$. Then $\Pc_{\overline{M}} = \Pc_M$, since the redundant columns in $M$ do not affect the structure of $\Pc_M$ and, as $\origin$ always lies in the interior of $\Pc_M$, the same holds for the zero columns. 
Hence, the polytope $\Pc_{\cM}$ does not see loops or parallel elements of $\cM$; as a consequence, we can replace $\cM$ by its \emph{simplification}\footnote{In \cite[Definition 4.6]{AHK} this is called the \emph{combinatorial geometry} of $\cM$.} $\overline{\cM}$.
\end{remark}

\begin{notation}In the setting of \Cref{rem:simplification}, we will say $M$ has \emph{irredundant columns} if $\overline{M} = M$, i.e., if the regular matroid represented by $M$ is simple. 
\end{notation}

We now wish to collect some properties of generalized symmetric edge polytopes. We point out that parts (i) to (iii) of \Cref{thm:first properties} below were essentially already known to Ohsugi and Hibi \cite[Lemma 2.11]{OhsugiHibiCS}.

\begin{theorem} \label{thm:first properties}
Let $\cM$ be a regular matroid of rank $r>0$. The following properties hold:
\begin{enumerate}
\item $\Pc_{\cM}$ is centrally symmetric;
    \item $\dim(\Pc_{\cM}) = \mathrm{rk}(\cM)$;
    \item $\Pc_{\cM}$ is reflexive;
    \item $\Pc_{\cM}$ is terminal, i.e., the only points of $\Pc_{\cM}$ with integer coordinates are its vertices and the origin;
    \item The vertices of $\Pc_{\cM}$ are twice as many as the atoms of the lattice of flats of $\cM$. In particular, if $\cM$ is simple, every antipodal pair of vertices of $\Pc_{\cM}$ corresponds to an element in the ground set of $\cM$.

\end{enumerate}
\end{theorem}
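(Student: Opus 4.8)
The plan is to fix a full-rank totally unimodular representation $M = [I_r \ | \ D]$ of $\cM$ (which we may assume has irredundant columns by \Cref{rem:simplification}), so that $\Pc_{\cM} = \Pc_M = \conv[M \ | \ -M] \subseteq \RR^r$, and prove all five parts by examining this concrete polytope; the conclusions are intrinsic to $\cM$ by \Cref{thm:well defined}. Part (i) is immediate: the vertex list is symmetric under negation by construction, so $\Pc_M = -\Pc_M$. For part (ii), the columns of $M$ include the standard basis vectors $\ef_1, \dots, \ef_r$ (the identity block), so $\Pc_M$ contains $\pm\ef_1, \dots, \pm\ef_r$ and hence spans $\RR^r$ affinely, giving $\dim(\Pc_M) = r = \mathrm{rk}(\cM)$; the reverse inequality is trivial since $\Pc_M \subseteq \RR^r$.

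For parts (iii) and (iv) I would argue via a single lattice-point count. First, $\origin \in \mathring{\Pc}_M$: indeed $\origin = \tfrac12 \cf + \tfrac12(-\cf)$ for any column $\cf$, and since the columns span, $\origin$ is in the interior. Next I claim the only lattice point in $\Pc_M$ other than the vertices and $\origin$ is none at all — i.e.\ terminality — and this simultaneously handles reflexivity via the standard criterion. To see terminality, suppose $\xf \in \Pc_M \cap \ZZ^r$ is not a vertex and not $\origin$; write $\xf = \sum_i t_i \vf_i$ with $t_i \geq 0$, $\sum t_i = 1$, over the $\pm$columns $\vf_i$. Key point: every column $\cf$ of $M$ satisfies $\|\cf\|_1 \leq$ something, but more usefully, I would invoke that $M$ totally unimodular means every entry of every column is in $\{0, \pm1\}$ and — crucially — for any proper subset sum of columns the relevant combinatorics of signed circuits (\Cref{prop:useful facts on signed circuits}) controls integrality. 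The cleaner route: a lattice point $\xf$ in the convex hull of $\pm$columns of a totally unimodular matrix lies, by Carath\'eodory plus total unimodularity, in a face; if $\xf \neq \origin$ it lies on the boundary, and on each facet the vertices lying on it are affinely independent (this is where \Cref{prop:useful facts on signed circuits} enters), forcing $\xf$ to be a vertex. For reflexivity I would instead cite that $\Pc_M$ is reflexive — already established in \cite{OhsugiHibiCS} and recorded as known before the theorem — or give the quick argument that every facet-defining inequality of $\conv[M\ |\ -M]$ has the form $\uf \cdot \xf \leq 1$ with $\uf \in \ZZ^r$, which follows because facets are spanned by $r$ affinely independent $\pm1/0$-vectors whose defining functional is integral by Cramer's rule and total unimodularity.

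Part (v) is the real content. I would show that the map sending a column $\cf_i$ of $M$ to the antipodal pair $\{\cf_i, -\cf_i\}$ descends to a bijection from atoms of the lattice of flats of $\cM$ (equivalently, parallel classes of non-loop elements, equivalently elements of the simplification $\overline{\cM}$) to antipodal pairs of vertices of $\Pc_M$. Surjectivity onto \emph{vertices} requires showing each $\pm\cf_i$ is actually a vertex, not a non-trivial convex combination of other $\pm$columns: if $\cf_i = \sum_{j} t_j \vf_j$ nontrivially, then $\cf_i - \sum t_j \vf_j = 0$ exhibits a linear dependence with small coefficients, and analyzing its support via the signed circuits of $M$ (again \Cref{prop:useful facts on signed circuits}: a flow supported on a circuit has all coordinates of equal absolute value) shows $\vf_j = \pm\cf_i$ for all $j$ in the combination — contradiction with nontriviality unless it's the trivial one. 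Injectivity: distinct non-parallel columns give distinct antipodal pairs since $\cf_i = \pm\cf_j$ would mean $i,j$ are parallel, excluded after simplification. Finally, atoms of the lattice of flats of $\cM$ are exactly the rank-$1$ flats, which biject with parallel classes of non-loops, which biject with elements of $\overline{\cM}$, which biject with columns of $M$ (after simplification); when $\cM$ is already simple this is just the identity on the ground set.

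The main obstacle is the vertex-identification step appearing in both (iv) and (v): ruling out that a column $\cf_i$ (or a nonzero lattice point) is a proper convex combination of the other $\pm$columns. The right tool is \Cref{prop:useful facts on signed circuits}(i) — any integer vector in $\ker(M)$ supported on a circuit has all coordinates of equal absolute value — applied to the dependence $\sum_j t_j \vf_j - \cf_i = 0$ after clearing denominators; combined with $\sum t_j = 1$ this pins down the support. I expect the bookkeeping here (handling the case where the combination uses both $\cf_k$ and $-\cf_k$, and reducing a general dependence to circuit-supported ones) to be the only genuinely delicate part; everything else is routine once $M$ is put in the form $[I_r \ | \ D]$.
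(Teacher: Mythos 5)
Your parts (i), (ii), (iii) match the paper's approach; the gap is in parts (iv) and (v), and it is a real one. For terminality you propose the step ``on each facet the vertices lying on it are affinely independent,'' but this is false for generalized symmetric edge polytopes: already $\Pc_{K_4}$, which is the cuboctahedron, has square facets, so facets of $\Pc_{\cM}$ need not be simplices. Moreover, \Cref{prop:useful facts on signed circuits} says nothing about affine independence of vertices on a facet; it concerns the coordinates of flows supported on circuits. Also, the preliminary claim that any lattice point $\xf \neq \origin$ lies on the boundary presupposes reflexivity with $\origin$ as the sole interior lattice point, which you are in the process of establishing, so the argument as sketched is circular as well as factually broken.

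The paper's argument for (iv) avoids all of this and is much more elementary. Since $M$ is totally unimodular, $\Pc_M \subseteq [-1,1]^r$. Take a nonzero lattice point $\xf = \sum_i \lambda_i \vf_i$ with $\lambda_i>0$, $\sum\lambda_i = 1$, the $\vf_i$ distinct nonzero columns of $\Mpm$. By central symmetry one may assume $x_1 = 1$, which forces $(\vf_i)_1 = 1$ for every $i$ in the combination. If $\xf$ is not already equal to $\vf_1$, they differ in some coordinate, say the second; then $x_2 = 0$ while $(\vf_1)_2 = \pm 1$, so some $\vf_j$ in the combination has $(\vf_j)_2 = -(\vf_1)_2$. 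The resulting $2\times 2$ submatrix of $\Mpm$ has determinant $\pm 2$, contradicting total unimodularity. Part (v) in the paper is proved by the identical $2\times 2$ determinant device (after reducing to the simple case via \Cref{rem:simplification}): assume a column of $\Mpm$ is a nontrivial convex combination of the others and produce a forbidden submatrix. Your alternative route through \Cref{prop:useful facts on signed circuits} for (v) is not obviously wrong, but as you concede the bookkeeping with mixed $\pm\cf_k$ terms and reduction to circuit supports is where the real work would be, and you have not carried it out; the paper's hypercube-plus-determinant argument dispatches both (iv) and (v) uniformly and should replace what you wrote.
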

\begin{proof}
Part (i) is immediate by definition, no matter which representation $M$ we choose for $\cM$.

Due to \Cref{thm:well defined} we know that, if $M$ and $M'$ are two full-rank weakly unimodular matrices representing $\cM$, we can go from $\Pc_{M}$ to $\Pc_{M'}$ and from $\Pc_{M}^{\Delta}$ to $\Pc_{M'}^{\Delta}$ via unimodular maps that do not involve any translation. In particular, it is enough to prove statements (ii)--(v) for $\Pc_{M}$, where $M$ is a full-rank totally unimodular $r \times n$ matrix representing $\cM$.

Part (ii) is now immediate and, together with part (i), implies that the origin lies in the interior of $\Pc_M$; hence, the polar polytope $\Pc_M^{\Delta}$ is well-defined, and an $\mathcal{H}$-presentation for it is given by $\Mpm^T \mathbf{x} \leq \mathbf{1}$. Since $M$ is totally unimodular, so is $\begin{bmatrix}M \ | \  -M\end{bmatrix}^T$; the polar $\Pc_M^{\Delta}$ must then be a lattice polytope (see for instance \cite[Theorem 19.1]{Schrijver}), and hence $\Pc_M$ is reflexive. This proves part (iii).

As regards part (iv), pick a lattice point $\mathbf{x} = (x_1, \ldots, x_r)$ of $\Pc_M$ different from the origin. Then we can write $\mathbf{x} = \sum_i \lambda_i \mathbf{v}_i$, where $\lambda_i > 0$, $\sum_i \lambda_i = 1$, and the vertices $\mathbf{v}_i$ form a set of pairwise distinct nonzero columns of $\Mpm$. If $r=1$, the claim is obvious. Assume hence that $r > 1$. Since $\mathbf{x} \neq \origin$ and $\Pc_M$ is a centrally symmetric subset of the hypercube $[-1,1]^r$, we can assume without loss of generality that $x_1 = 1$. Then the first coordinate of every $\mathbf{v}_i$ must also equal $1$. If $\mathbf{x} = \mathbf{v}_1$, there is nothing to prove. Assume otherwise. Then there is a coordinate (without loss of generality, the second one) in which $\mathbf{x}$ and $\mathbf{v}_1$ differ. This can happen only if $x_2 = 0$ and $(\mathbf{v}_1)_2 \in \{1, -1\}$. But then there must exist some $j > 1$ such that $(\mathbf{v}_j)_2 = -(\mathbf{v}_1)_2$. As a consequence, the totally unimodular matrix $\Mpm$ contains the submatrix \[\begin{bmatrix}1 & 1 \\ (\mathbf{v}_1)_2 & -(\mathbf{v}_1)_2\end{bmatrix}\] with determinant $2$ or $-2$. This yields a contradiction.

Finally, it is enough to prove the statement of part (v) when $\cM$ is simple. When this is the case, then $M$ has irredundant columns; denote by $\cf_1, \ldots, \cf_{2n}$ the columns of $\Mpm$. Assume by contradiction that a column of $\Mpm$ (without loss of generality, the first one) can be expressed as a convex combination of the other ones; i.e., $\cf_1 = \sum_{j \in J}\lambda_j \cf_j$ for some $J \subseteq \{2, 3, \ldots, 2n\}$, $\lambda_j > 0$, $\sum_{j \in J}\lambda_j = 1$. Since $M$ has no zero columns and $\Pc_M$ is a centrally symmetric subset of the hypercube $[-1,1]^r$, we can assume without loss of generality that $(\cf_1)_1 = 1$, and this in turn implies that $(\cf_j)_1 = 1$ for every $j \in J$. Arguing in a similar way to part (iv), one can then build a submatrix of $\Mpm$ with determinant $2$ or $-2$, which in turn yields the desired contradiction.
\end{proof}
    \begin{figure}[h!] 
        \includegraphics[scale=0.5]{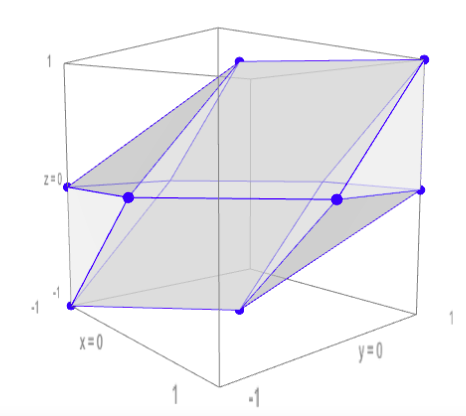}
        \caption{The polytope $\Pc_M$, where $M$ is the matrix from \Cref{ex:running}. Plot generated by SageMath \cite{sagemath}.}
        \label{fig:gSEP example}
    \end{figure}
    
\begin{example}
    Let $\cM$ and $M$ be as in \Cref{ex:running}. Then $\Pc_M$ is the polytope shown in \Cref{fig:gSEP example}. One has that $\dim \Pc_M = \mathrm{rk}(M) = 3$; since the matroid $\cM$ is simple, the lattice points of $\Pc_M$ are the origin and the columns of $\Mpm$.
\end{example}

\Cref{thm:first properties} gives us the tools to establish a partial converse to \Cref{thm:well defined}. 

\begin{theorem} \label{thm:same simple matroid}
Let $M, N \in \ZZ^{r \times n}$ (where $0 < r \leq n$) be two full-rank weakly unimodular matrices with irredundant columns, and assume that the polytopes $\Pc_M$ and $\Pc_N$ are unimodularly equivalent. Then there exist $F \in \mathrm{GL}_r(\ZZ)$ and a signed permutation matrix $P \in \ZZ^{n \times n}$ such that $N = FMP$. In particular, $N$ and $M$ represent the same simple regular matroid $\cM$.   
\end{theorem}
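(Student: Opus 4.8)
The plan is to leverage the rigidity of the lattice-point structure of generalized symmetric edge polytopes recorded in \Cref{thm:first properties}. Since $M$ and $N$ have irredundant columns, the matroids they represent are simple, so both $\Pc_M$ and $\Pc_N$ are full-dimensional in $\RR^r$ (being full-rank, $M$ and $N$ have columns spanning $\RR^r$), and a unimodular equivalence between them is an affine map $\phi(\xf) = A\xf + \mathbf{b}$ with $A \in \mathrm{GL}_r(\ZZ)$, $\mathbf{b} \in \ZZ^r$ and $\phi(\Pc_M) = \Pc_N$.

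The first step is to see that $\mathbf{b} = \origin$. By part (iv) of \Cref{thm:first properties} the vertices of either polytope lie on its boundary, so in both cases the origin is the unique interior lattice point. Since $\phi$ is a homeomorphism of $\RR^r$ carrying $\Pc_M$ onto $\Pc_N$ and $\ZZ^r$ onto $\ZZ^r$, it restricts to a bijection $\mathring{\Pc_M}\cap\ZZ^r \to \mathring{\Pc_N}\cap\ZZ^r$ between singletons, forcing $\phi(\origin)=\origin$. Thus $\phi = A$. The second step is to identify the nonzero lattice points: writing $\cf_1, \dots, \cf_n$ for the columns of $M$, irredundancy makes the $2n$ columns of $\Mpm$ pairwise distinct and nonzero, and by parts (iv)--(v) of \Cref{thm:first properties} they are exactly the vertices of $\Pc_M$ and exhaust $(\Pc_M\cap\ZZ^r)\setminus\{\origin\}$; similarly for $N$ with columns $\mathbf{d}_1,\dots,\mathbf{d}_n$. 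Hence $A$ maps $\{\pm\cf_i : i \in [n]\}$ bijectively onto $\{\pm\mathbf{d}_j : j \in [n]\}$, and $A\cf_i = -A\cf_j$ would give $\cf_i = -\cf_j$, impossible by irredundancy of $M$ (including $i=j$, since $\cf_i \neq \origin$); so $A\cf_1,\dots,A\cf_n$ are $n$ distinct columns of $\Npm$, no two antipodal. This yields a map $\sigma\colon[n]\to[n]$ and signs $\varepsilon_i\in\{\pm1\}$, with $A\cf_i = \varepsilon_i\mathbf{d}_{\sigma(i)}$; distinctness and non-antipodality of the $A\cf_i$ force $\sigma$ to be a bijection, and irredundancy of $N$ makes $(\sigma(i),\varepsilon_i)$ uniquely determined. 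In matrix form $AM = NP'$ for the signed permutation matrix $P'$ with $(P')_{\sigma(i),i} = \varepsilon_i$. Taking $F \coloneqq A \in \mathrm{GL}_r(\ZZ)$ and $P \coloneqq (P')^{-1}$ (again a signed permutation matrix) gives $N = FMP$. Finally, left multiplication by an element of $\mathrm{GL}_r(\ZZ)$ and right multiplication by a signed permutation matrix both preserve the pattern of linearly (in)dependent column subsets, so $M$ and $N$ represent the same simple regular matroid.

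The genuinely load-bearing step --- as opposed to the routine bookkeeping with signed permutations --- is the identification of the nonzero lattice points of $\Pc_M$ with the columns of $\Mpm$; this is exactly where simplicity (irredundant columns) together with the terminality and vertex-count statements of \Cref{thm:first properties} are indispensable, and the hypothesis cannot be dropped since $\Pc_{\cM}$ does not detect loops or parallel elements (cf.\ \Cref{rem:simplification}). Beyond being careful that $\sigma$ is a genuine bijection with no residual sign ambiguity, I do not anticipate any real obstacle.
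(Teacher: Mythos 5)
Your proof is correct and follows essentially the same route as the paper's: first use reflexivity/terminality to conclude the unimodular equivalence involves no translation (since $\origin$ is the unique interior lattice point of each polytope), then invoke \Cref{thm:first properties}(v) to identify the vertices with the columns of $\Mpm$ and $\Npm$, whence the linear map $A$ induces a signed permutation of the columns. The paper compresses the bookkeeping by observing directly that $\psi_F(\Pc_M) = \Pc_{FM}$ and that $FM$ is again full-rank weakly unimodular with irredundant columns, so $\FMpm$ and $\Npm$ have the same column set; your explicit construction of $\sigma$ and the signs $\varepsilon_i$ spells out the same conclusion.
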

\begin{proof}
By assumption there exist $F \in \mathrm{GL}_r(\ZZ)$ and $\vf \in \ZZ^r$ such that $\Pc_{N} = \psi_F(\Pc_{M}) + \vf$. Since $\mathbf{0}$ is the only interior point of the reflexive polytopes $\Pc_{N}$ and $\Pc_{M}$, it must be that $\vf = \mathbf{0}$, so that no translation is actually involved. Moreover, one can easily check that $\Pc_N = \psi_F(\Pc_{M}) = \Pc_{FM}$.

Since the matrices $FM$ and $N$ are both full-rank and weakly unimodular, \Cref{thm:first properties}(v) implies that the columns of both $\FMpm$ and $\Npm$ correspond to the vertices of $\Pc_{FM} = \Pc_N$. As a consequence, the matrices $FM$ and $N$ can only differ by a signed permutation of their columns; in other words, there exists a signed permutation matrix $P \in \ZZ^{n \times n}$ such that $N = FMP$, as desired.
\end{proof}

As a consequence, we obtain that the matroidal setting is the ``right'' one to study even the usual symmetric edge polytopes.

\begin{theorem} \label{thm:SEPs via graphic matroids}
Let $G$ and $H$ be finite simple graphs. Then the symmetric edge polytopes $\Pc_G$ and $\Pc_H$ are unimodularly equivalent if and only if the graphic matroids $\cM_G$ and $\cM_H$ are isomorphic. 
\end{theorem}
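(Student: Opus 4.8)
The plan is to deduce Theorem~\ref{thm:SEPs via graphic matroids} from the general matroidal machinery established above, in particular Theorem~\ref{thm:same simple matroid} and Remark~\ref{rem:TU full-rank for G}. The starting observation is that, although the standard signed incidence matrix $A_G$ is not full-rank, Remark~\ref{rem:TU full-rank for G} (via a directed spanning tree / spanning forest of $G$) produces an explicit \emph{full-rank} totally unimodular matrix $M_G = [I_r \mid D]$ representing the graphic matroid $\cM_G$, with the property that $\Pc_{M_G}$ is unimodularly equivalent to $\Pc_G$. Since $G$ is simple, $\cM_G$ is a simple matroid, so $M_G$ has irredundant columns in the sense of the notation following Remark~\ref{rem:simplification}. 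The same applies to $H$, yielding $M_H$ with $\Pc_{M_H}$ unimodularly equivalent to $\Pc_H$.

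For the backward direction, suppose $\cM_G \cong \cM_H$. Then $M_G$ and $M_H$ are two full-rank weakly unimodular representations of one and the same regular matroid (up to relabeling the ground set, which only permutes columns), so Theorem~\ref{thm:well defined} gives that $\Pc_{M_G}$ and $\Pc_{M_H}$ are unimodularly equivalent; chaining with the two equivalences above yields that $\Pc_G$ and $\Pc_H$ are unimodularly equivalent. For the forward direction, suppose $\Pc_G$ and $\Pc_H$ are unimodularly equivalent; then so are $\Pc_{M_G}$ and $\Pc_{M_H}$. Both $M_G$ and $M_H$ are full-rank, weakly (indeed totally) unimodular, and have irredundant columns, so Theorem~\ref{thm:same simple matroid} applies and shows that $M_G$ and $M_H$ represent the same simple regular matroid; that is, $\cM_G \cong \cM_H$, as desired.

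One mild technical point to address is the hypothesis $r>0$ (equivalently, the rank of the graphic matroid being nonzero) required throughout: one should either assume $G$ and $H$ have at least one edge, or note that if one of them is edgeless then $\Pc_G$ is a point and the statement is trivial. Another point is the passage from ``$\cM_G\cong\cM_H$'' to ``$M_G,M_H$ are two representations of the same matroid'': an isomorphism $\cM_G\to\cM_H$ is a bijection of ground sets, which corresponds to a permutation matrix acting on the columns; since permuting columns does not change $\Pc_{M}$ (as already noted in the proof of Theorem~\ref{thm:well defined}), this is harmless, but it is worth spelling out so that Theorem~\ref{thm:well defined} can be invoked cleanly.

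The main obstacle, such as it is, is bookkeeping rather than mathematics: one must make sure that the two ``reductions'' — from $A_G$ to a full-rank representation, and from an abstract matroid isomorphism to an honest equality of labelled matroids — are carried out so that the hypotheses of Theorems~\ref{thm:well defined} and \ref{thm:same simple matroid} (full rank, weak unimodularity, irredundant columns, nonzero rank) are literally satisfied. Once this is in place, the proof is a two-line application of the earlier results. It may also be worth remarking here, as the introduction promises, that combining this with Whitney's $2$-isomorphism theorem yields unimodular equivalence $\Longleftrightarrow$ graph isomorphism for $3$-connected $G,H$, and that this corrects the statement in \cite{FirstSEP}.
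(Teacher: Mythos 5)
Your proof is correct and takes essentially the same route as the paper: the ``if'' direction from \Cref{rem:TU full-rank for G} together with \Cref{thm:well defined}, and the ``only if'' direction from \Cref{rem:TU full-rank for G} together with \Cref{thm:same simple matroid}, using that the graphic matroid of a simple graph is simple so the full-rank representation has irredundant columns. Your additional bookkeeping (the rank-zero corner case, the distinction between a matroid isomorphism and equality of labelled matroids) is sound and in fact makes the argument slightly more explicit than the paper's one-line proof.
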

\begin{proof}
The ``if'' part follows from \Cref{rem:TU full-rank for G} and \Cref{thm:well defined}. The ``only if'' part follows from \Cref{rem:TU full-rank for G} and \Cref{thm:same simple matroid}, noting that any signed incidence matrix of a simple graph has irredundant columns by construction.
\end{proof}

\begin{corollary}
    Let $G$ and $H$ be finite simple $3$-connected graphs. Then the symmetric edge polytopes $\Pc_G$ and $\Pc_H$ are unimodularly equivalent if and only if $G$ and $H$ are isomorphic.
\end{corollary}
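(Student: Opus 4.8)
The plan is to strip away the polytopal language immediately using \Cref{thm:SEPs via graphic matroids}, which reduces the claim to a purely graph-theoretic statement that is a classical consequence of Whitney's theory.

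By \Cref{thm:SEPs via graphic matroids}, $\Pc_G$ and $\Pc_H$ are unimodularly equivalent if and only if the graphic matroids $\cM_G$ and $\cM_H$ are isomorphic, so it suffices to show that, for simple $3$-connected graphs $G$ and $H$, one has $\cM_G \cong \cM_H$ if and only if $G \cong H$. The implication ``$G \cong H \Rightarrow \cM_G \cong \cM_H$'' is trivial: a graph isomorphism sends cycles to cycles and hence induces an isomorphism of cycle matroids (equivalently, it realizes $\Pc_H$ as the image of $\Pc_G$ under the coordinate relabeling it induces). For the converse, I would invoke Whitney's $2$-isomorphism theorem (see \cite[Section~5.3]{Oxley}): graphs without isolated vertices have isomorphic cycle matroids precisely when they are $2$-isomorphic, that is, obtainable from one another by a finite sequence of vertex identifications, vertex cleavings, and Whitney twists. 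Since none of these operations can be carried out nontrivially on a $3$-connected graph, Whitney's theorem specializes to the assertion that a $3$-connected graph is, up to isomorphism, the unique graph without isolated vertices whose cycle matroid is $\cM_G$. Applying this to $G$ and $H$ yields $G \cong H$.

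I do not anticipate any real obstacle here; the only points needing attention are housekeeping. The cycle matroid does not record isolated vertices, but a $3$-connected graph has none, so no ambiguity arises. One should also confirm the degenerate small cases, but $3$-connectivity forces at least four vertices, and on that range the rigidity of $3$-connected graphs under $2$-isomorphism holds without exception. Thus the whole argument rests on \Cref{thm:SEPs via graphic matroids} together with Whitney's theorem, and beyond citing the latter correctly there is essentially nothing to compute.
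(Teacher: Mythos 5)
Your argument is correct and matches the paper's proof exactly: both reduce via \Cref{thm:SEPs via graphic matroids} to comparing graphic matroids and then invoke Whitney's $2$-isomorphism theorem, which for $3$-connected simple graphs forces a graph isomorphism. The extra remarks about isolated vertices and small cases are harmless but unnecessary housekeeping.
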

\begin{proof}
    This follows directly from \Cref{thm:SEPs via graphic matroids} and Whitney's $2$-isomorphism theorem \cite[Theorem 5.3.1]{Oxley}.
\end{proof}

\begin{remark} \label{rem:unimodularly equivalent SEPs}
    It was claimed in \cite[Lemma 4.4]{FirstSEP} that, if $G$ and $H$ are finite simple graphs and $G$ is $2$-connected, then $\Pc_G$ and $\Pc_H$ are unimodularly equivalent if and only if $G$ and $H$ are isomorphic. Unfortunately, this claim is erroneous and affects the validity of \cite[Theorem 4.5]{FirstSEP} as well: indeed, there exist non-isomorphic $2$-connected graphs giving rise to the same graphic matroid, and thus having unimodularly equivalent symmetric edge polytopes by \Cref{thm:SEPs via graphic matroids}. The key to build such objects is the \emph{Whitney twist} operation, see \cite[Section 5.3]{Oxley}. We provide here an explicit example.

    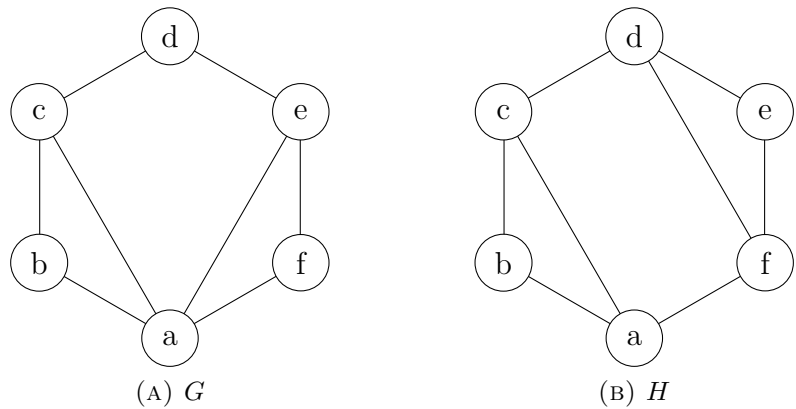
\begin{figure}[h!]
    
\subfloat[$G$]{    
\begin{tikzpicture}[rotate=30, transform shape]
\foreach \a in {6}{
\node[regular polygon, regular polygon sides=\a, minimum size=4cm, line width=.4pt, draw] at (\a*4,0) (A) {};
\draw[line width=.4pt] (A.corner 2) -- (A.corner 4) -- (A.corner 6);

    \node[circle, draw, minimum size=0.75cm, fill=white, rotate=-30] at (A.corner 1) {d};
    \node[circle, draw, minimum size=0.75cm, fill=white, rotate=-30] at (A.corner 2) {c};
    \node[circle, draw, minimum size=0.75cm, fill=white, rotate=-30] at (A.corner 3) {b};
    \node[circle, draw, minimum size=0.75cm, fill=white, rotate=-30] at (A.corner 4) {a};
    \node[circle, draw, minimum size=0.75cm, fill=white, rotate=-30] at (A.corner 5) {f};
    \node[circle, draw, minimum size=0.75cm, fill=white, rotate=-30] at (A.corner 6) {e};
}
\end{tikzpicture}
}%
\qquad \qquad
\subfloat[$H$]{
\begin{tikzpicture}[rotate=30, transform shape]
\foreach \a in {6}{
\node[regular polygon, regular polygon sides=\a, minimum size=4cm, line width=.4pt, draw] at (\a*4,0) (A) {};
\draw[line width=.4pt] (A.corner 2) -- (A.corner 4);
\draw[line width=.4pt] (A.corner 1) -- (A.corner 5);

    \node[circle, draw, minimum size=0.75cm, fill=white, rotate=-30] at (A.corner 1) {d};
    \node[circle, draw, minimum size=0.75cm, fill=white, rotate=-30] at (A.corner 2) {c};
    \node[circle, draw, minimum size=0.75cm, fill=white, rotate=-30] at (A.corner 3) {b};
    \node[circle, draw, minimum size=0.75cm, fill=white, rotate=-30] at (A.corner 4) {a};
    \node[circle, draw, minimum size=0.75cm, fill=white, rotate=-30] at (A.corner 5) {f};
    \node[circle, draw, minimum size=0.75cm, fill=white, rotate=-30] at (A.corner 6) {e};
    }
\end{tikzpicture}
}%
        \caption{Two non-isomorphic $2$-connected graphs $G$ and $H$ giving rise to unimodularly equivalent symmetric edge polytopes.}
        \label{fig:graphs}
    \end{figure}
    
    Let $G$ and $H$ be the $6$-vertex graphs depicted in \Cref{fig:graphs}. Both $G$ and $H$ are $2$-connected; moreover, since the vertex $a$ has degree $4$ in $G$ and all vertices have degree at most $3$ in $H$, the graphs $G$ and $H$ are not isomorphic. After matching the $i$-th letter of the English alphabet with the $i$-th coordinate of $\mathbb{R}^6$, consider the $5$-dimensional symmetric edge polytopes $\Pc_G$ and $\Pc_H$ in $\mathbb{R}^6$. Letting \[F = \begin{bmatrix}0 & -1 & -1 & 0 & 0 & 0\\0 & 1 & 0 & 0 & 0 & 0\\1 & 1 & 2 & 2 & 1 & 1\\0 & 0 & 0 & 0 & 1 & 0\\-1 & -1 & -1  & -1 & -1 & 0\\1 & 1 & 1 & 0 & 0 & 0\end{bmatrix} \in \mathrm{GL}_6(\ZZ),\]
    
one checks that the unimodular map $\psi_F: \mathbb{R}^6 \to \mathbb{R}^6$ sending $\xf$ to $F\xf$ transforms $\Pc_G$ into $\Pc_H$, and thus $\Pc_G$ and $\Pc_H$ are unimodularly equivalent.    
\end{remark}

\section{Facets of $\Pc_{\cM}$ and the Ehrhart theory of the polar polytope} \label{sec:polar}

After defining generalized symmetric edge polytopes and investigating their first structural properties, it is our next goal to find a combinatorial characterization of their facets. In order to achieve this, it is fruitful to focus on the Ehrhart theory of the polar polytope. Unless specified differently, in this section we will only consider \emph{simple} regular matroids of positive rank, so that by \Cref{thm:first properties}(v) the vertices of $\Pc_{\cM}$ will correspond to the columns of $\Mpm$ for any full-rank weakly unimodular matrix $M$ representing $\cM$.

Inspired by work of Beck and Zaslavsky \cite{BeZa}, we begin by providing a description of the lattice points in the $k$-th dilation of $\Pc^{\Delta}_{\cM}$. 

\begin{proposition} \label{prop:polar lattice points}
Let $k$ be a positive integer, $\cM$ be a simple regular matroid of rank $r > 0$ and $M$ be a full-rank weakly unimodular matrix representing $\cM$. Then the map
\[
\begin{split}
(k \cdot \Pc_M^{\Delta}) \cap \ZZ^r &\to \{(k+1)\text{-cuts of }M\}\\
\uf &\mapsto M^T{\uf}
\end{split}\]
is a bijection.
\end{proposition}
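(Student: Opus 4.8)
The plan is to first make the polar concrete. Since $\Pc_M = \conv\Mpm$ is the convex hull of the columns $\pm\cf_1, \dots, \pm\cf_n$ of $\Mpm$, a vector $\uf \in \RR^r$ lies in $\Pc_M^{\Delta}$ exactly when $|\uf \cdot \cf_i| \le 1$ for all $i$, that is, when $\|M^T\uf\|_{\infty} \le 1$; consequently $k \cdot \Pc_M^{\Delta} = \{\uf \in \RR^r : \|M^T\uf\|_{\infty} \le k\}$. (By \Cref{thm:first properties}, $\Pc_M$ is full-dimensional and centrally symmetric, so $\origin$ lies in its interior and the polar is a genuine polytope.) With this description the map is well-defined: if $\uf \in (k\cdot\Pc_M^{\Delta})\cap\ZZ^r$, then $M^T\uf$ lies in $\mathrm{row}(M)\cap\ZZ^n = \Gamma(M)$ and all of its entries have absolute value at most $k$, so $M^T\uf$ is a $(k+1)$-cut of $M$.

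Injectivity is immediate: since $M$ has full rank $r$, the linear map $\uf \mapsto M^T\uf$ is injective on all of $\RR^r$, hence in particular on $(k\cdot\Pc_M^{\Delta})\cap\ZZ^r$.

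For surjectivity, fix a $(k+1)$-cut $\blambda \in \Gamma(M)$; it is enough to find $\uf \in \ZZ^r$ with $M^T\uf = \blambda$, since then the bound $\|M^T\uf\|_{\infty} \le k$ is automatic and places $\uf$ in $k\cdot\Pc_M^{\Delta}$. After permuting columns (which, as observed in the proof of \Cref{thm:well defined}, does not affect the polytope and permutes the coordinates of the cuts accordingly), we may assume that the first $r$ columns of $M$ form a basis of $\cM$; let $N$ be the corresponding invertible $r\times r$ submatrix. Weak unimodularity of $M$ forces $\det N = \pm 1$, so $(N^T)^{-1}$ has integer entries and $\uf \coloneqq (N^T)^{-1}(\lambda_1, \dots, \lambda_r)^T \in \ZZ^r$. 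By construction $M^T\uf$ lies in $\mathrm{row}(M)\cap\ZZ^n = \Gamma(M)$ and has $\lambda_1, \dots, \lambda_r$ as its first $r$ entries; the uniqueness statement in \Cref{lem:unique cut} then gives $M^T\uf = \blambda$, which is exactly what we wanted.

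The one step that carries content is the integrality of the preimage $\uf$, and this is precisely where the weak unimodularity hypothesis is used: a maximal invertible submatrix of a weakly unimodular matrix has determinant $\pm 1$, hence an integer inverse. Everything else is bookkeeping — translating polarity into the $\infty$-norm bound, and invoking \Cref{lem:unique cut} to upgrade ``agreement on a basis'' to ``equality of cuts''.
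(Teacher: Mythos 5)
Your proof is correct and follows essentially the same route as the paper's: rewrite $k\cdot\Pc_M^{\Delta}$ as $\{\uf : \|M^T\uf\|_\infty \le k\}$, get well-definedness for free, injectivity from full rank, and surjectivity by producing an integer preimage of a $(k+1)$-cut. The one place you go further is the surjectivity step: the paper simply asserts that $\boldsymbol{\gamma}\in\mathrm{row}(M)$ already gives an \emph{integer} $\uf$ with $M^T\uf=\boldsymbol{\gamma}$, while you actually justify the integrality by passing to the invertible $r\times r$ submatrix $N$ coming from a basis, using weak unimodularity to get $\det N=\pm 1$ and hence $(N^T)^{-1}\in\mathrm{GL}_r(\ZZ)$, and then invoking \Cref{lem:unique cut} to upgrade agreement on a basis to equality of cuts. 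That extra paragraph is genuinely the content of the claim (it is exactly the statement that $M^T(\ZZ^r)=\mathrm{row}(M)\cap\ZZ^n$ for a full-rank weakly unimodular $M$), and it is nice that you flagged it; the paper leaves it implicit.
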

\begin{proof}

Let us first describe in more detail the polar polytope $\Pc_M^{\Delta}$. A facet description of $\Pc_M^{\Delta}$ is given by $\Mpm^T \uf \leq \mathbf{1}$, which in turn implies that
\begin{equation} \label{eq:polar description M}
    k\cdot\Pc_M^{\Delta} = \left\{\mathbf{u}\in \RR^r : -k\cdot\mathbf{1} \leq M^T\uf \leq k\cdot\mathbf{1}\right\},
\end{equation}
where the inequalities are meant to be taken componentwise. This implies that, if $\uf \in (k \cdot \Pc_M^{\Delta}) \cap \ZZ^r$, then $(M^T\uf)$ is an element of $\mathrm{row}(M) \cap \ZZ^n$ such that $|(M^T\uf)_i| \leq k$ for every $i \in [n]$. This means precisely that $M^T\uf$ is a $(k+1)$-cut of $M$.

Vice versa, let $\boldsymbol{\gamma}$ be a $(k+1)$-cut of $M$. Since $\boldsymbol{\gamma} \in \mathrm{row}(M)$, there exists $\uf \in \ZZ^r$ such that $M^T\uf = \boldsymbol{\gamma}$. Since $M$ is full-rank, the linear map $\ZZ^r \to \ZZ^n$ defined by $M^T$ is injective, and thus $\uf$ is uniquely determined. Since $\uf$ satisfies the inequalities in \eqref{eq:polar description M}, we have that $\uf$ is a lattice point of $k \cdot \Pc_M^{\Delta}$, and this finishes the proof.
\end{proof}

\begin{example}
Let $M$ be as in \Cref{ex:running}. Then the polar polytope $\Pc^{\Delta}_M$ is shown in \Cref{fig:dual gSEP example}. The lattice points of $\Pc^{\Delta}_M$ are obtained from the $2$-cuts in \Cref{ex:cuts and flows} by throwing away the last two coordinates.

    \begin{figure}[h!] 
    \includegraphics[scale=0.5]{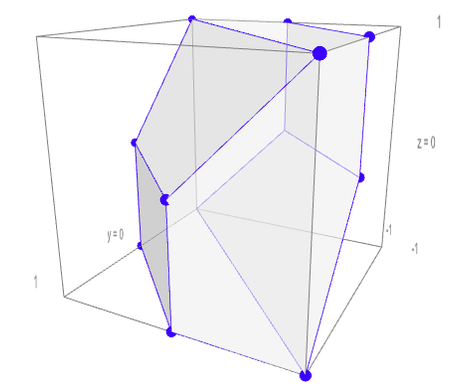}
        \caption{The polar polytope $\Pc^{\Delta}_M$, where $M$ is the matrix from \Cref{ex:running}. Plot generated by SageMath \cite{sagemath}.}
        \label{fig:dual gSEP example}
    \end{figure}
\end{example}

\begin{remark} \label{rem:lattice of cuts}
    A consequence of \Cref{prop:polar lattice points} is that the lattice of cuts $\Gamma(M)$ can be thought of as the union of the lattice points of $k \cdot \Pc_M^{\Delta}$ as $k$ varies in $\NN$ (with the convention that $0 \cdot \Pc_M^{\Delta} = \{\origin\}$). This gives us an interpretation of the lattice of cuts as a ``limit object''.
\end{remark}

It follows from the argument in \Cref{not:F_u} that, if $\uf$ is a lattice point of $\Pc_M^{\Delta}$, then \[F_{\uf} = \conv(\{M\ef_i \mid M\ef_i \cdot \uf = 1\} \cup \{-M\ef_{i} \mid M\ef_i \cdot \uf = -1\})\] is a face of $\Pc_M$ with supporting hyperplane \[H_{\uf} = \{\xf \in \RR^r \mid \xf \cdot \uf = 1\}\] (where we are using the fact that the columns of $\Mpm$ correspond to the vertices of $\Pc_M$). Since by \Cref{prop:polar lattice points} $\boldsymbol{\gamma} \coloneqq M^T\uf$ is a $2$-cut of $M$, we can rewrite $F_{\uf}$ in the following way:
\[F_{\uf} = \conv(\{M\ef_i \mid \gamma_i = 1\} \cup \{-M\ef_{i} \mid \gamma_i = -1\}).\]

In other words, the $2$-cut $\boldsymbol{\gamma} = M^T\uf$ acts as an indicator vector for $F_{\uf}$, in the following sense: the $i$-th entry of $\boldsymbol{\gamma}$ equals $+1$ (respectively, $-1$) if and only if the vertex $M\ef_i$ (respectively, $-M\ef_{i}$) belongs to $F_{\uf}$.

Next, we are going to define a partial order on $\{0, \pm1\}$-tuples that will enable us to give a first characterization of the facets of $\Pc_\cM$. 

\begin{definition} \label{def:partial order}
Let $\uf, \vf \in \{0, \pm1\}^m$. We will write that $\uf \preceq \vf$ if for every $i \in [m]$ it holds that $u_i = 0$ or $u_i = v_i$. Equivalently, $\preceq$ is the partial order induced componentwise by the relations ``$0 \prec +1$'', ``$0 \prec -1$'' and ``$+1$ and $-1$ are incomparable''.
\end{definition}

\begin{remark} \label{rem:face poset of the cross-polytope}
Note that $\{0, \pm 1\}^m$ equipped with the partial order from \Cref{def:partial order} is isomorphic to the face lattice of the $m$-dimensional cross-polytope: see for example \cite[Example 2.9]{AdamsReiner}. More specifically, the isomorphism maps $\boldsymbol{\gamma} \in \{0, \pm 1\}^m$ to the face obtained as $\conv(\{\ef_i \mid \gamma_i = 1\} \cup \{-\ef_i \mid \gamma_i = -1\})$. This foreshadows the upcoming characterizations of the facets of $\Pc_{\cM}$.
\end{remark}

A direct consequence of the definition of $\preceq$ is that $F_{\uf} \subseteq F_{\ufp}$ if and only if $M^T\uf \preceq M^T\ufp$. This immediately yields a first characterization of the facets of $\Pc_{\cM}$ when $\cM$ is simple.

\begin{corollary}\label{cor:facets via polarity}
Let $\cM$ be a simple regular matroid of positive rank and let $M$ be a full-rank weakly unimodular representation of $\cM$. Then the facets of $\Pc_M$ are the faces $F_{\uf}$ of $\Pc_M$ for which $M^T{\uf}$ is a $\preceq$-maximal $2$-cut of $M$.
\end{corollary}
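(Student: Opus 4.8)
The statement to prove is Corollary~\ref{cor:facets via polarity}, which characterizes the facets of $\Pc_M$ via $\preceq$-maximal $2$-cuts. The plan is to derive this directly from the polarity framework set up in \Cref{not:F_u} together with the dictionary between lattice points of $\Pc_M^{\Delta}$ and $2$-cuts of $M$ established in \Cref{prop:polar lattice points}. First I would recall that, since $\cM$ is simple and $M$ is full-rank weakly unimodular, \Cref{thm:first properties}(iii) tells us $\Pc_M$ is reflexive, so $\Pc_M^{\Delta}$ is a lattice polytope and by \Cref{not:F_u} its vertices are exactly the integer points $\uf \in \Pc_M^{\Delta} \cap \ZZ^r$ for which $F_{\uf}$ is a facet of $\Pc_M$; conversely every facet of $\Pc_M$ arises this way. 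Thus the facets of $\Pc_M$ are precisely the faces $F_{\uf}$ with $\uf$ a vertex of $\Pc_M^{\Delta}$.

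Next I would translate ``$\uf$ is a vertex of $\Pc_M^{\Delta}$'' into the language of $2$-cuts. A lattice point $\uf$ of $\Pc_M^{\Delta}$ fails to be a vertex precisely when $F_{\uf}$ is a proper face of some larger face $F_{\ufp}$ of $\Pc_M$, i.e.\ when there is a lattice point $\ufp \neq \uf$ of $\Pc_M^{\Delta}$ with $F_{\uf} \subsetneq F_{\ufp}$. By the remark immediately preceding the corollary, $F_{\uf} \subseteq F_{\ufp}$ holds if and only if $M^T\uf \preceq M^T\ufp$; combining this with the injectivity of $\uf \mapsto M^T\uf$ from \Cref{prop:polar lattice points} (which also gives that $M^T\uf$ ranges bijectively over the $2$-cuts of $M$ as $\uf$ ranges over $\Pc_M^{\Delta} \cap \ZZ^r$), we see that $\uf$ is a vertex of $\Pc_M^{\Delta}$ exactly when the $2$-cut $\boldsymbol{\gamma} \coloneqq M^T\uf$ is $\preceq$-maximal among all $2$-cuts of $M$. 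Putting the two equivalences together yields that $F_{\uf}$ is a facet of $\Pc_M$ if and only if $M^T\uf$ is a $\preceq$-maximal $2$-cut, which is exactly the assertion.

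There is no serious obstacle here: the corollary is essentially a bookkeeping consequence of the preceding discussion, and the only point that needs a little care is making sure that the correspondence $F_{\uf} \subseteq F_{\ufp} \iff M^T\uf \preceq M^T\ufp$ is used in both directions and that $\preceq$-maximality is tested against \emph{all} $2$-cuts (equivalently, all lattice points of $\Pc_M^{\Delta}$), which is guaranteed by the bijection of \Cref{prop:polar lattice points} in the case $k = 1$. One should also note that maximality in the face lattice of $\Pc_M$ among \emph{proper} faces means being a facet, which is standard; the origin, being an interior point, never enters as a vertex of $\Pc_M^{\Delta}$ or as an index $\uf$, so no degenerate cases arise.
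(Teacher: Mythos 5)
Your proof is correct and follows the same route the paper has in mind: the paper declares the corollary an "immediate" consequence of the order-equivalence $F_{\uf} \subseteq F_{\ufp} \iff M^T\uf \preceq M^T\ufp$ together with \Cref{not:F_u} and \Cref{prop:polar lattice points}, and your write-up just spells out that bookkeeping (vertices of $\Pc_M^{\Delta}$ are the lattice points $\uf$ whose face $F_{\uf}$ is maximal among proper faces, which under the bijection of \Cref{prop:polar lattice points} is exactly $\preceq$-maximality of $M^T\uf$ among $2$-cuts).
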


The facet description in \Cref{cor:facets via polarity} is not completely satisfactory. Our next goal is to develop an alternate characterization that will be the ``right'' generalization of the description obtained by Higashitani, Jochemko and Micha{\l}ek \cite[Theorem 3.1]{HJM} for classical symmetric edge polytopes: see \Cref{rem:comparison with facets of classical SEPs} below for a more detailed discussion.

\begin{definition}
Let $\cM$ be a regular matroid of positive rank and let $M$ be a full-rank weakly unimodular representation of $\cM$. We will say that the cut $\boldsymbol{\gamma} \in \Gamma(M)$ is \emph{spanning} if the support of $\boldsymbol{\gamma}$ contains a basis of $\cM$.
\end{definition}

\begin{theorem} \label{thm:facets as spanning 2-cuts}
Let $\cM$ be a simple regular matroid of rank $r > 0$ and let $M$ be a full-rank weakly unimodular representation of $\cM$. Then the facets of $\Pc_M$ are the faces $F_{\uf}$ of $\Pc_M$ for which $M^T\uf$ is a spanning $2$-cut of $M$.
\end{theorem}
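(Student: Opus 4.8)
The plan is to reduce the statement to a combinatorial equivalence about cuts and then settle that equivalence using \Cref{lem:unique cut} together with the flow--cut dual of \Cref{prop:useful facts on signed circuits}. By \Cref{cor:facets via polarity} the facets of $\Pc_M$ are exactly the faces $F_{\uf}$ for which $M^T\uf$ is a $\preceq$-maximal $2$-cut, so it is enough to prove that a $2$-cut $\boldsymbol{\gamma} \in \Gamma(M)$ is $\preceq$-maximal among all $2$-cuts if and only if $\supp(\boldsymbol{\gamma})$ contains a basis of $\cM$.

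For the implication ``spanning $\Rightarrow$ $\preceq$-maximal'' I would use the rigidity of cuts. Suppose $\supp(\boldsymbol{\gamma}) \supseteq \Bc$ for some basis $\Bc$ of $\cM$ and let $\boldsymbol{\gamma}'$ be any cut with $\boldsymbol{\gamma} \preceq \boldsymbol{\gamma}'$. On $\Bc$ the entries of $\boldsymbol{\gamma}$ are nonzero, so $\preceq$ forces $\boldsymbol{\gamma}$ and $\boldsymbol{\gamma}'$ to agree there; permuting the columns of $M$ so that $\Bc$ occupies the first $r$ positions (which permutes $\Gamma(M)$ correspondingly and preserves the weakly unimodular property) and then invoking \Cref{lem:unique cut} gives $\boldsymbol{\gamma} = \boldsymbol{\gamma}'$. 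Hence $\boldsymbol{\gamma}$ is in fact $\preceq$-maximal among all cuts, a fortiori among $2$-cuts.

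For the converse I would argue by contraposition. If $S \coloneqq \supp(\boldsymbol{\gamma})$ contains no basis, then $\mathrm{rk}_{\cM}(S) < r$, which is equivalent to saying that the complement $Z \coloneqq [n]\setminus S$ contains a cocircuit $C^*$ of $\cM$ (cocircuits being the complements of hyperplanes). Since $\cM$, and hence $\cM^*$, is regular and $C^*$ is a circuit of $\cM^*$, applying \Cref{prop:useful facts on signed circuits}(ii) to a full-rank weakly unimodular representation of $\cM^*$ — and using the identification of $\Gamma(M)$ with the lattice of integer flows of $\cM^*$ coming from \eqref{eq:flow-cut duality} and the discussion following \Cref{def:signed circuits} — produces a cut $\boldsymbol{\delta} \in \Gamma(M)$ with all entries in $\{0,\pm1\}$ and $\supp(\boldsymbol{\delta}) = C^* \subseteq Z$. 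Then $\boldsymbol{\gamma} + \boldsymbol{\delta} \in \Gamma(M)$ coincides with $\boldsymbol{\gamma}$ on $S$ and with $\boldsymbol{\delta}$ on $Z$, so all of its entries lie in $\{0,\pm1\}$; it is therefore a $2$-cut, and since $\boldsymbol{\delta}\neq\origin$ is supported off $S$ we get $\boldsymbol{\gamma} \prec \boldsymbol{\gamma}+\boldsymbol{\delta}$, so $\boldsymbol{\gamma}$ is not $\preceq$-maximal.

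I expect the main conceptual point to be the translation ``$\supp(\boldsymbol{\gamma})$ is non-spanning $\iff$ its complement contains a cocircuit'', together with the observation that adding a $\{0,\pm1\}$-valued signed cocircuit supported in that complement keeps us inside the $2$-cuts. Everything else is bookkeeping: checking that \Cref{lem:unique cut} may be applied to an arbitrary basis (a harmless column permutation) and transporting \Cref{prop:useful facts on signed circuits} to the dual matroid via flow--cut duality.
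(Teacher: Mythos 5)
Your proof is correct, but it takes a genuinely different route from the paper's. The paper proves the theorem directly: for ``spanning $\Rightarrow$ facet'' it observes that a spanning $2$-cut indexes $r$ linearly (hence affinely, since $\origin\notin F_{\uf}$) independent vertices, which already forces a facet by dimension count; for ``facet $\Rightarrow$ spanning $2$-cut'' it extracts $r$ linearly independent vertices from the facet, uses \Cref{lem:unique cut} to build a unique spanning cut $\boldsymbol{\gamma}$ agreeing with those choices on a basis, and then compares it with the $2$-cut $\boldsymbol{\gamma'}$ coming from the vertex of $\Pc_M^\Delta$ via \Cref{prop:polar lattice points} to conclude $\boldsymbol{\gamma}=\boldsymbol{\gamma'}$. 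You instead factor through \Cref{cor:facets via polarity} and reduce everything to the purely combinatorial statement that a $2$-cut is $\preceq$-maximal iff it is spanning --- which the paper derives as a corollary \emph{after} the theorem rather than proving it first. Your ``spanning $\Rightarrow$ $\preceq$-maximal'' step is essentially the same rigidity argument via \Cref{lem:unique cut}, but your contrapositive ``non-spanning $\Rightarrow$ non-maximal'' is a new argument: you use the fact that the complement of a non-spanning set contains a cocircuit, transport \Cref{prop:useful facts on signed circuits}(ii) through flow--cut duality to get a signed cocircuit $\boldsymbol{\delta}\in\Gamma(M)$ supported there, and observe $\boldsymbol{\gamma}\prec\boldsymbol{\gamma}+\boldsymbol{\delta}$ with $\boldsymbol{\gamma}+\boldsymbol{\delta}$ still a $2$-cut. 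This is arguably more matroid-theoretic and self-contained than the paper's approach; the paper's version buys a slightly shorter proof at the cost of also invoking \Cref{prop:polar lattice points} and the polarity discussion. Both are valid.
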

\begin{proof}
Let us recall once more that, since $\cM$ is simple, the vertices of $\Pc_M$ are in bijection with the columns of $\Mpm$ by \Cref{thm:first properties}(v).

Let us show that, if $\boldsymbol{\gamma} = M^T\uf$ is a spanning $2$-cut of $M$, then $F_{\uf}$ is a facet of $\Pc_M$. Since $\boldsymbol{\gamma}$ is spanning, by the discussion after \Cref{rem:lattice of cuts} we know that the face $F_{\uf}$ contains $r$ linearly independent vertices. Since $\origin \notin F_{\uf}$, such vertices are also affinely independent; but then, since $\dim(\Pc_M) = r$ by \Cref{thm:first properties}(ii), it follows that $F_{\uf}$ must be a facet.

Let us now prove that all facets of $\Pc_M$ arise in this fashion. Let $G$ be a facet of $\Pc_M$. Since $\dim(\Pc_M) = r$, the facet $G$ must contain $r$ linearly independent vertices $\vf_1, \ldots, \vf_r$; these will correspond to certain columns of $M$ or $-M$. If the $i$-th column of $M$ appears among the $\vf_j$'s, set $\gamma_i = 1$; if the $i$-th column of $-M$ does, set $\gamma_i = -1$. Possibly after some relabeling, we can assume without loss of generality that $\gamma_i \neq 0$ for every $i \in [r]$. By \Cref{lem:unique cut}, there exists a unique cut $\boldsymbol{\gamma}$ compatible with the above assignments; moreover, such a cut is spanning by construction. It only remains to show that $\boldsymbol{\gamma}$ is a $2$-cut. By polarity, the facet $G$ corresponds to a vertex $\uf'$ of the polar polytope $\Pc_M^{\Delta}$; it then follows from \Cref{prop:polar lattice points} that \begin{equation*}G = F_{\uf'} = \conv(\{M\ef_i \mid \gamma'_i = 1\} \cup \{-M\ef_{i} \mid \gamma'_i = -1\}),\end{equation*}
where $\boldsymbol{\gamma'} = M^T{\uf'}$ is a $2$-cut of $M$. Since $\boldsymbol{\gamma}$ and $\boldsymbol{\gamma'}$ coincide on a basis of $\cM$, it follows from the uniqueness of the cut in \Cref{lem:unique cut} that $\boldsymbol{\gamma} = \boldsymbol{\gamma'}$ and hence $\boldsymbol{\gamma}$ is a a $2$-cut, as desired.
\end{proof}

\begin{example}
Let $M$ be as in \Cref{ex:running}. Twelve of the seventeen $2$-cuts enumerated in \Cref{ex:cuts and flows} are spanning: these are $(1,0,0,-1,1)$, $(-1,0,0,1,-1)$, $(0,1,0,-1,1)$, $(0,-1,0,1,-1)$,  $(1,0,-1,0,1)$, $(-1,0,1,0,-1)$, $(0,1,-1,0,1)$, $(0,-1,1,0,-1)$, $(1,-1,1,-1,0)$, $(-1,1,-1,1,0)$, $(1,-1,-1,1,0)$, $(-1,1,1,-1,0)$.

Hence, $\Pc_M$ has twelve facets, and each of the spanning $2$-cuts serves as an indicator vector for one of them: for instance, the $2$-cut $(1,0,0,-1,1)$ corresponds to the facet obtained as the convex hull of $\ef_1$, $\ef_1+\ef_2+\ef_3$ and $\ef_1+\ef_2$ (respectively, the first, minus the fourth, and the fifth column of $M$).
\end{example}

\begin{remark} \label{rem:comparison with facets of classical SEPs}
Some words are needed in order to explain in which sense \Cref{thm:facets as spanning 2-cuts} generalizes the characterization of facets obtained by Higashitani, Jochemko and Micha{\l}ek for classical symmetric edge polytopes \cite[Theorem 3.1]{HJM}. If $G$ is a connected graph, facets of the symmetric edge polytope $\Pc_G$ were shown to be in bijection with integer vertex labelings such that
\begin{itemize}
\item[(i)] if $i$ and $j$ are adjacent in $G$, then their labels differ at most by one;
\item[(ii)] the subgraph of $G$ consisting of the edges $\{i, j\}$ whose vertex labels differ exactly by one contains a spanning tree of $G$.
\end{itemize}
(For the statement to be precise, one further needs to identify any two vertex labelings that differ by a fixed constant value on each vertex.) The first author, Delucchi and Micha{\l}ek observed in \cite[proof of Proposition 61]{DDM} that, after fixing an orientation of $G$, such a characterization is equivalent to asking for integer \emph{edge} labelings such that
\begin{itemize}
\item[(a)] each label is either $1$, $0$ or $-1$;
\item[(b)] the sum of the labels on each oriented cycle of $G$ is zero;
\item[(c)] the set of edges with nonzero labels contains a spanning tree of $G$. 
\end{itemize}
This last characterization corresponds to \Cref{thm:facets as spanning 2-cuts} in the special case when $\cM$ is the graphic matroid associated with $G$ and $M$ is the signed incidence matrix associated with the chosen orientation of $G$ (although, to be fully precise, such a matrix is not full-rank). Indeed, labeling the (oriented) edges of $G$ can be thought of as labeling the columns of the matrix $M$. More in detail, condition (b) can be expressed more succinctly by saying that the desired edge labelings are cuts of $M$, while conditions (a) and (c) further specify that they must be spanning $2$-cuts. 
\end{remark}

Comparing the facet characterization from \Cref{cor:facets via polarity} with the one found in \Cref{thm:facets as spanning 2-cuts} immediately yields the following corollary:

\begin{corollary}
Let $\cM$ be a simple regular matroid of positive rank and $M$ a full-rank weakly unimodular representation of $\cM$. Then a $2$-cut of $M$ is spanning if and only if it is $\preceq$-maximal.
\end{corollary}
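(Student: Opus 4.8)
The plan is to avoid any new geometry and instead observe that \Cref{cor:facets via polarity} and \Cref{thm:facets as spanning 2-cuts} are two different descriptions of one and the same set — the set of facets of $\Pc_M$ — and to compare them by transporting everything through the bijection of \Cref{prop:polar lattice points}.

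First I would invoke \Cref{prop:polar lattice points} with $k = 1$: the map $\uf \mapsto M^T\uf$ is a bijection from $\Pc_M^{\Delta} \cap \ZZ^r$ onto the set of $2$-cuts of $M$. Consequently, every $2$-cut $\boldsymbol{\gamma}$ of $M$ is of the form $\boldsymbol{\gamma} = M^T\uf$ for a unique lattice point $\uf$ of the polar polytope, and it makes sense to speak of the associated face $F_{\uf}$ of $\Pc_M$.

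Then I would chain the two facet characterizations. On the one hand, \Cref{cor:facets via polarity} says that $F_{\uf}$ is a facet of $\Pc_M$ exactly when $\boldsymbol{\gamma} = M^T\uf$ is a $\preceq$-maximal $2$-cut of $M$. On the other hand, \Cref{thm:facets as spanning 2-cuts} says that $F_{\uf}$ is a facet of $\Pc_M$ exactly when $\boldsymbol{\gamma} = M^T\uf$ is a spanning $2$-cut of $M$. Since $\boldsymbol{\gamma}$ ranges over all $2$-cuts as $\uf$ ranges over the lattice points of $\Pc_M^{\Delta}$, putting the two equivalences together gives that a $2$-cut of $M$ is spanning if and only if it is $\preceq$-maximal. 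No case distinction for the zero cut is required, since the equivalence is with the property ``$F_{\uf}$ is a facet'' on both sides; if one wishes, one can check directly that the zero cut — which corresponds to $\uf = \origin$ — is neither spanning (its support is empty, while $\mathrm{rk}(\cM) > 0$) nor $\preceq$-maximal.

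There is no real obstacle here: all the work lies in the two facet theorems already proved, and the only point that needs attention is that ``$\preceq$-maximal'' in both \Cref{cor:facets via polarity} and in the statement of the corollary is understood as maximality inside the poset of $2$-cuts of $M$ (with the order induced by \Cref{def:partial order}), so that the two statements really do compare the same thing.
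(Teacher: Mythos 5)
Your proposal is exactly the paper's argument: the corollary is stated there as an immediate consequence of comparing \Cref{cor:facets via polarity} with \Cref{thm:facets as spanning 2-cuts}, with \Cref{prop:polar lattice points} providing the underlying bijection between lattice points of the polar and $2$-cuts. Your extra remark about the zero cut being neither spanning nor $\preceq$-maximal is a harmless sanity check and does not change the substance.
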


The next result generalizes \cite[Proposition 61]{DDM}. We recall that a matroid is said to be \emph{bipartite} if all of its circuits have even cardinality.

\begin{proposition} \label{prop:bipartite facets}
Let $\cM$ be a simple regular bipartite matroid of rank $r > 0$ and let $M$ be a full-rank weakly unimodular representation of $\cM$.
Then the facets of $\Pc_{M}$ are in bijection with the nowhere-zero $2$-cuts of $M$.
\end{proposition}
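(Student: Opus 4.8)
The plan is to leverage \Cref{thm:facets as spanning 2-cuts}, which tells us that the facets of $\Pc_M$ are exactly the faces $F_{\uf}$ indexed by spanning $2$-cuts of $M$. Since a nowhere-zero $2$-cut is automatically spanning (its support is all of $[n]$, which certainly contains a basis of $\cM$ because $\cM$ has rank $r>0$), one inclusion is free: every nowhere-zero $2$-cut gives a facet. So the real content is the converse: when $\cM$ is bipartite, every spanning $2$-cut is in fact nowhere-zero.

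To prove this, I would argue by contradiction. Suppose $\boldsymbol{\gamma}$ is a spanning $2$-cut of $M$ with $\gamma_i = 0$ for some $i \in [n]$. Since $\boldsymbol{\gamma}$ is spanning, its support contains a basis $\Bc$ of $\cM$, and necessarily $i \notin \Bc$. Consider the fundamental circuit $\Cc(i, \Bc)$ of $i$ with respect to $\Bc$, together with its associated fundamental signed circuit $\overrightarrow{\Cc}(i,\Bc) \in \Lambda(M)$ (using \Cref{prop:useful facts on signed circuits}(i), all its nonzero entries are $\pm 1$, and the $i$-th entry is $1$). Because $\boldsymbol{\gamma}$ is a cut and $\overrightarrow{\Cc}(i,\Bc)$ is a flow, they are orthogonal: $\boldsymbol{\gamma} \cdot \overrightarrow{\Cc}(i,\Bc) = 0$. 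Now split this dot product according to whether the index is $i$ or lies in $\Bc$: since $\gamma_i = 0$, the $i$-th term vanishes, so $\sum_{j \in \Cc(i,\Bc) \setminus \{i\}} \gamma_j \cdot (\overrightarrow{\Cc}(i,\Bc))_j = 0$, where every $\gamma_j \in \{\pm 1\}$ (these indices are in $\Bc \subseteq \supp(\boldsymbol{\gamma})$) and every $(\overrightarrow{\Cc}(i,\Bc))_j \in \{\pm 1\}$. Thus the left-hand side is a sum of $|\Cc(i,\Bc)| - 1$ terms each equal to $\pm 1$; for it to be zero, $|\Cc(i,\Bc)| - 1$ must be even, i.e. $|\Cc(i,\Bc)|$ is odd. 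But $\cM$ is bipartite, so every circuit has even cardinality — contradiction. Hence no spanning $2$-cut can have a zero entry.

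Putting the two directions together: the spanning $2$-cuts of $M$ are precisely the nowhere-zero $2$-cuts of $M$, and by \Cref{thm:facets as spanning 2-cuts} these are in bijection with the facets of $\Pc_M$.

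The only mildly delicate point is making sure the parity argument is airtight: one must check that the index $i$ with $\gamma_i = 0$ genuinely does not lie in the chosen basis $\Bc$ (immediate, since $\Bc \subseteq \supp(\boldsymbol{\gamma})$ and $i \notin \supp(\boldsymbol{\gamma})$), and that every index of $\Cc(i,\Bc)$ other than $i$ lies in $\Bc$ (immediate from the definition of fundamental circuit, $\Cc(i,\Bc) \subseteq \Bc \cup \{i\}$), so that indeed $\gamma_j = \pm 1$ there. Everything else is a direct application of flow-cut orthogonality and \Cref{prop:useful facts on signed circuits}; I do not anticipate any real obstacle.
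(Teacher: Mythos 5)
Your proof is correct and takes essentially the same route as the paper's: invoke \Cref{thm:facets as spanning 2-cuts}, observe that nowhere-zero implies spanning, and then use orthogonality of a spanning $2$-cut against a fundamental signed circuit together with the bipartite parity constraint. The one small difference is cosmetic: the paper gives a direct argument (choosing the basis to be $[r]$ after relabeling and treating $r=n$ as a separate trivial case), whereas your contradiction framing with an arbitrary basis $\Bc\subseteq\supp(\boldsymbol{\gamma})$ absorbs the $r=n$ case automatically, since then $\Bc=[n]$ and no coordinate can vanish.
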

\begin{proof}
By \Cref{thm:facets as spanning 2-cuts}, it is enough to prove that the spanning $2$-cuts of $M$ are exactly the nowhere-zero $2$-cuts of $M$. Clearly, every nowhere-zero $2$-cut must be spanning. 

For the reverse containment, let $n$ be the number of elements in the ground set of $\cM$. If $r=n$, then $\cM$ is the uniform matroid $\mathcal{U}_{n,n}$, the polytope $\Pc_{M}$ is unimodularly equivalent to the $n$-dimensional cross-polytope, and its $2^n$ facets correspond to the nowhere-zero elements of $\mathrm{row}(M) \cap \{0, \pm 1\}^n = \{0, \pm 1\}^n$ (see also \Cref{rem:face poset of the cross-polytope}).

Assume now that $r < n$. Let $\boldsymbol{\gamma}$ be a spanning $2$-cut of $M$ and assume without loss of generality that $\gamma_i \neq 0$ for every $i \in [r]$. Now pick any $j \in \{r+1, \ldots, n\}$ and consider the fundamental signed circuit $\overrightarrow{\Cc}(j, [r])$, whose support has even cardinality because of the bipartite assumption. Since $\boldsymbol{\gamma} \cdot \overrightarrow{\Cc}(j, [r]) = 0$, one has that $0$ is the sum of $\gamma_j$ and an odd number of elements in $\{+1, -1\}$. For parity reasons, it follows that $\gamma_j \neq 0$, which proves the claim. 
\end{proof}

\section{A regular unimodular triangulation for $\Pc_{\cM}$} \label{sec:gb}
It follows from a result of Ohsugi and Hibi \cite[Theorem 2.7]{OhsugiHibiCS} that the polytope $\Pc_{\cM}$ always admits a regular unimodular triangulation. The aim of this section is to find an explicit description generalizing what Higashitani, Jochemko and Micha{\l}ek found in the context of symmetric edge polytopes \cite[Proposition 3.8]{HJM}. Since the desired characterization involves signed circuits (see \Cref{def:signed circuits}), our results will be expressed in terms of a fixed full-rank weakly unimodular representation of the given (simple) regular matroid $\cM$.

If $M$ is a full-rank weakly unimodular matrix, then $\Mpm$ is as well. It will be useful to describe the signed circuits of the latter in terms of the former. To achieve this goal, we need to introduce some more notation.

\begin{notation} \label{notation:eta_I}
Let $J \subseteq [n]$. We will denote by $\eta_J$ the injective map $\ZZ^n \to \ZZ^{2n}$ sending $(\lambda_1, \ldots, \lambda_n)$ to $(\tlambda_1, \ldots, \tlambda_{2n})$, where for every $i \in [n]$
\[
\begin{aligned}
\tlambda_i \coloneqq 
{\begin{cases}
0 & i\in J\\
\lambda_i & i\notin J
\end{cases}} & \quad\text{and} &
\tlambda_{n+i} \coloneqq 
{\begin{cases}
-\lambda_i & i\in J\\
0 & i\notin J.
\end{cases}} 
\end{aligned}
\]

Basically, given an integer vector, the map $\eta_J$ changes the sign of the entries indexed by an element of $J$, and then moves them to the second half of an integer vector twice as long. We will sometimes refer to this operation as a \emph{promotion}. Note that $\eta_J$ restricts to a map $\{0, \pm1\}^n \to \{0, \pm1\}^{2n}$.
\end{notation}

\begin{remark} \label{rem:image of eta_I}
Note that $\btlambda \in \ZZ^{2n}$ is in the image of $\eta_J$ precisely when $\tlambda_i = 0$ for every $i \in J$ and $\tlambda_{n+i} = 0$ for every $i \notin J$. In particular, if the support of $\btmu \in \ZZ^{2n}$ is contained in the support of $\btlambda \in \mathrm{im}(\eta_J)$, then $\btmu \in \mathrm{im}(\eta_J)$.
\end{remark}

\begin{lemma} \label{lem:eta_circuits}
Let $J \subseteq [n]$ and let $M \in \ZZ^{r \times n}$ (where $0 < r \leq n$) be a full-rank weakly unimodular integer matrix. Then $\blambda \in \sgncirc(M)$ if and only if $\eta_J(\blambda) \in \sgncirc([M\ \!|\ \!-M])$.
\end{lemma}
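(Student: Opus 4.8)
The statement to prove is \Cref{lem:eta_circuits}: for a fixed $J \subseteq [n]$ and a full-rank weakly unimodular $M \in \ZZ^{r\times n}$, we have $\blambda \in \sgncirc(M)$ if and only if $\eta_J(\blambda) \in \sgncirc([M\,|\,{-}M])$. The plan is to check the two defining conditions of a signed circuit separately, namely (a) being a $2$-flow (i.e., lying in the kernel, with entries in $\{0,\pm1\}$) and (b) having support equal to a circuit of the matroid, and to verify that $\eta_J$ translates each condition faithfully in both directions.

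First I would set up the kernel correspondence. For $\btlambda = \eta_J(\blambda)$, a direct computation using the block structure $[M\,|\,{-}M]$ shows that $[M\,|\,{-}M]\,\btlambda = \sum_{i\notin J} \lambda_i (M\ef_i) + \sum_{i\in J} (-\lambda_i)(-M\ef_i) = \sum_{i=1}^n \lambda_i (M\ef_i) = M\blambda$, so $\btlambda \in \ker([M\,|\,{-}M])$ if and only if $\blambda \in \ker(M)$. Since $\eta_J$ preserves the multiset of nonzero entry-values up to sign (it just relocates them and flips some signs), $\blambda \in \{0,\pm1\}^n$ iff $\btlambda \in \{0,\pm1\}^{2n}$; combined with the kernel equivalence, $\blambda$ is a $2$-flow of $M$ iff $\btlambda$ is a $2$-flow of $[M\,|\,{-}M]$. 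Next I would handle supports: by construction $\supp(\eta_J(\blambda)) = \{\,i : i\notin J,\ \lambda_i\neq 0\,\} \cup \{\,n+i : i\in J,\ \lambda_i\neq 0\,\}$, which is exactly the image of $\supp(\blambda)$ under the natural bijection $[n] \to (([n]\setminus J)\,\dot\cup\,(J+n))$ that labels the columns of $[M\,|\,{-}M]$. So it remains to argue that this bijection carries circuits of the matroid of $M$ to circuits of the matroid of $[M\,|\,{-}M]$. This is essentially immediate from the matroidal side: a circuit of $M$ is a minimal set $\Cc$ of column indices whose columns are linearly dependent, and since each column $-M\ef_i$ of the ``$-M$'' block is a scalar multiple of $M\ef_i$, replacing some indices $i$ by $n+i$ (for $i\in J$) sends $\{M\ef_i\}$-dependencies to $\{\pm M\ef_i\}$-dependencies and preserves minimality. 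Thus $\supp(\blambda) \in \Ck(M)$ iff $\supp(\btlambda) \in \Ck([M\,|\,{-}M])$.

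Putting the two pieces together gives both implications: if $\blambda \in \sgncirc(M)$ then $\btlambda$ is a $2$-flow of $[M\,|\,{-}M]$ whose support is a circuit of $[M\,|\,{-}M]$, hence $\btlambda \in \sgncirc([M\,|\,{-}M])$; conversely, if $\eta_J(\blambda) \in \sgncirc([M\,|\,{-}M])$, the same two equivalences run backwards. (One subtlety worth a sentence: $\eta_J$ is injective, and we should note that not every signed circuit of $[M\,|\,{-}M]$ is of the form $\eta_J(\blambda)$ for this particular $J$ — but the lemma only claims the equivalence for a given $\blambda$ and the fixed $J$, so this is not needed; \Cref{rem:image of eta_I} would be the tool if one wanted the image description.)

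The only genuinely delicate point is the claim that the support-relabeling map sends circuits to circuits — i.e., that the matroid of $[M\,|\,{-}M]$ is obtained from the matroid of $M$ by adding, for each $i$, a parallel copy of element $i$, and that ``promotion'' is a matroid isomorphism onto the subset of ground-set elements we land in. I expect this to be the main (modest) obstacle: it has to be stated carefully because the target subset $([n]\setminus J)\,\dot\cup\,(J+n)$ is a \emph{transversal} of the parallel classes, and one must observe that the restriction of the matroid of $[M\,|\,{-}M]$ to such a transversal is isomorphic to the matroid of $M$ via $i \mapsto (i \text{ if } i\notin J,\ n+i \text{ if } i\in J)$. Once that is in place, everything else is the routine bookkeeping sketched above. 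Alternatively, since $\sgncirc$ is the Graver basis by \Cref{rem:unimodular toric}, one could phrase the whole argument at the level of the toric ideals $I_M$ and $I_{[M\,|\,{-}M]}$, but the direct kernel-plus-support argument seems cleanest here.
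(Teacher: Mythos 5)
Your proposal is correct, and it follows the same two-step skeleton as the paper: first the direct computation $[M\,|\,{-}M]\,\eta_J(\blambda) = M\blambda$ establishing that $\eta_J$ carries $2$-flows to $2$-flows, and then a separate argument that $\supp(\blambda)$ is a circuit of $M$ if and only if $\supp(\eta_J(\blambda))$ is a circuit of $[M\,|\,{-}M]$.

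Where you genuinely diverge is in the circuit step. The paper argues locally: it pushes a linear-dependence witness forward under $\eta_J$ in one direction, and in the other direction uses \Cref{rem:image of eta_I} to observe that any kernel vector $\btmu$ with $\supp(\btmu) \subseteq \supp(\eta_J(\blambda))$ automatically lies in the image of $\eta_J$ and can therefore be pulled back. You instead argue globally: the ground set $([n]\setminus J)\,\dot\cup\,(J+n)$ is a transversal of the parallel classes of the matroid of $[M\,|\,{-}M]$ (since column $n+i$ is the scalar multiple $-M\ef_i$ of column $i$), the restriction to this transversal is isomorphic to the matroid of $M$ via $i \mapsto i$ or $i\mapsto n+i$, and circuits of a restriction are precisely the circuits of the ambient matroid contained in the restriction set. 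Both are valid; your route is more conceptual and makes the structural reason for circuit-preservation explicit, while the paper's is more elementary and closer in spirit to the kernel computation it just performed. One small caveat worth a sentence in a polished write-up: the lemma does not assume $M$ has no zero columns, so for indices $i$ with $M\ef_i = \origin$ the elements $i$ and $n+i$ are loops rather than parallel elements; the transversal-restriction isomorphism still holds in that degenerate case (a loop goes to a loop), but it is not literally ``adding a parallel copy.''
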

\begin{proof}
Let $\blambda \in \ZZ^n$. By construction, one has that
\begin{equation*} \label{eq:lambda and lambda_tilde}
M\blambda = \sum_{i \in [n]}\lambda_i (M\ef_i) = \sum_{i \in [n]}(\tlambda_i - \tlambda_{n+i}) (M\ef_i) = \sum_{i \in [n]}\tlambda_i (M\ef_i) + \sum_{i \in [n]}\tlambda_{n+i}(-M\ef_{i}) = \Mpm\eta_J(\blambda).
\end{equation*}
In particular, $\blambda \in \ker(M)$ if and only if $\eta_J(\blambda) \in \ker\left(\Mpm\right)$, and $\blambda$ is a $2$-flow if and only if $\eta_J(\blambda)$ is. To prove the claim, we still need to show that $\supp(\blambda)$ is a circuit of $M$ if and only if $\supp(\eta_J(\blambda))$ is a circuit of $\Mpm$.

The definition of $\eta_J$ implies immediately that, if $\supp(\blambda)$ is not minimally dependent, then $\supp(\eta_J(\blambda))$ is not minimally dependent either. Conversely, assume $\supp(\eta_J(\blambda))$ is not minimally dependent. Then there exists $\btmu \in \ZZ^{2n}$ such that $\Mpm\btmu = \origin$ and $\supp(\btmu) \subseteq \supp(\eta_J(\blambda))$. By \Cref{rem:image of eta_I}, $\btmu$ belongs to the image of $\eta_J$ and hence $\supp(\blambda)$ is not minimally dependent, since $\supp(\eta_J^{-1}(\btmu)) \subseteq \supp(\blambda)$. This proves the claim.
\end{proof}

Provided that $M$ does not contain any zero column, the signed circuits of $\Mpm$ come in two flavors: on the one hand, we have the ones of the form $\pm(\ef_i + \ef_{n+i})$ (reflecting the relation $M\ef_i + (-M)\ef_{i} = \origin$), while on the other hand we have those obtained by promoting a signed circuit of $M$. This is the content of the technical lemma below.

\begin{lemma} \label{lem:circuits of Mpm}
Assume $M \in \ZZ^{r \times n}$ (where $0 < r \leq n$) is a full-rank weakly unimodular matrix not containing any zero column. Then
\begin{equation} \label{eq:circuits of Mpm}
    \overrightarrow{\Ck}([M\ \!|\ \!-M]) = \{\pm(\ef_i + \ef_{n+i}) : i \in [n]\} \cup \bigcup_{J \subseteq [n]}\eta_J(\overrightarrow{\Ck}(M)).
\end{equation}
\end{lemma}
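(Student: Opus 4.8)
The plan is to prove both inclusions in \eqref{eq:circuits of Mpm}. For the inclusion $\supseteq$, note first that each $\pm(\ef_i + \ef_{n+i})$ is obviously a signed circuit of $\Mpm$: its support $\{i, n+i\}$ is minimally dependent because $M\ef_i$ and $-M\ef_i$ are nonzero parallel columns (here we use that $M$ has no zero column), and it is a $2$-flow since its nonzero entries are $\pm 1$. The fact that $\eta_J(\overrightarrow{\Ck}(M)) \subseteq \overrightarrow{\Ck}(\Mpm)$ for every $J \subseteq [n]$ is exactly the ``only if'' direction of \Cref{lem:eta_circuits}.

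For the reverse inclusion $\subseteq$, let $\btmu \in \overrightarrow{\Ck}(\Mpm)$, so $\supp(\btmu)$ is a circuit $\Cc$ of $\Mpm$ and $\btmu$ is a $2$-flow. I would distinguish two cases according to whether $\Cc$ contains a pair of the form $\{i, n+i\}$ for some $i \in [n]$, i.e. whether both $\mu_i$ and $\mu_{n+i}$ are nonzero for some $i$. In that case, since $\{i, n+i\}$ is already a (minimal) circuit of $\Mpm$ and $\Cc$ is a circuit containing it, minimality forces $\Cc = \{i, n+i\}$; then by \Cref{prop:useful facts on signed circuits}(i) all coordinates of $\btmu$ have the same absolute value, which must be $1$ since $\btmu$ is a $2$-flow, so $\btmu = \pm(\ef_i + \ef_{n+i})$. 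In the other case, for every $i \in [n]$ at most one of $\mu_i, \mu_{n+i}$ is nonzero; pick $J$ to be the set of indices $i$ for which $\mu_{n+i} \neq 0$ (together with an arbitrary choice among the indices where both are zero — any such choice works, but to be definite take $J = \{i \in [n] : \mu_{n+i} \neq 0\}$). Then by \Cref{rem:image of eta_I} one checks that $\btmu$ lies in the image of $\eta_J$, say $\btmu = \eta_J(\blambda)$; applying the ``if'' direction of \Cref{lem:eta_circuits} shows $\blambda \in \overrightarrow{\Ck}(M)$, hence $\btmu \in \eta_J(\overrightarrow{\Ck}(M))$. This exhausts all cases and proves $\subseteq$.

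I expect the only subtle point to be the bookkeeping in the second case: verifying that $\btmu$ really is in $\mathrm{im}(\eta_J)$ for the chosen $J$ and that $\eta_J^{-1}(\btmu)$ has exactly the right support (namely a circuit of $M$). Both follow cleanly from \Cref{rem:image of eta_I} and \Cref{lem:eta_circuits}, so the argument should be short; the main obstacle is just being careful that the first case (a circuit of $\Mpm$ meeting some antipodal pair $\{i, n+i\}$) is genuinely forced to be a two-element circuit, which is where the no-zero-column hypothesis and the minimality of circuits are used.
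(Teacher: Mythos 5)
Your proof is correct and follows essentially the same approach as the paper's: establish $\supseteq$ directly via \Cref{lem:eta_circuits} and the no-zero-column hypothesis, then prove $\subseteq$ by distinguishing whether a signed circuit of $\Mpm$ has some antipodal pair $\{i, n+i\}$ in its support (forcing $\pm(\ef_i + \ef_{n+i})$ by minimality) or not (in which case it lies in the image of $\eta_J$ for $J = \{i : \mu_{n+i} \neq 0\}$, and \Cref{lem:eta_circuits} finishes). Your invocation of \Cref{prop:useful facts on signed circuits}(i) in the first case is a harmless detour — since a signed circuit already has all entries in $\{0, \pm 1\}$, one can read off $\btmu = \pm(\ef_i + \ef_{n+i})$ directly once the support is pinned to $\{i, n+i\}$ (the sign pattern $\pm(\ef_i - \ef_{n+i})$ is ruled out as it is not in the kernel) — but the argument goes through either way.
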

\begin{proof}
Let us first prove that the right hand side of \eqref{eq:circuits of Mpm} consists of signed circuits of $\Mpm$. This is clear for $\pm(\ef_i + \ef_{n+i})$, since \[\Mpm \ef_i + \Mpm \ef_{n+i} = M\ef_i + (-M)\ef_{i} = \origin\] and $M$ does not contain any zero column by hypothesis. Moreover, by \Cref{lem:eta_circuits}, $\eta_J(\blambda)$ is a signed circuit of $\Mpm$ for every choice of $J \subseteq [n]$ and $\blambda \in \overrightarrow{\Ck}(M)$.

Let us now prove that \emph{every} signed circuit of $\Mpm$ arises as in the right-hand side of \eqref{eq:circuits of Mpm}. Let $\btlambda = (\tlambda_1, \ldots, \tlambda_{2n}) \in \{0, \pm 1\}^{2n}$ be a signed circuit of $\Mpm$. If there exists $i \in [n]$ such that $\tlambda_i\tlambda_{n+i} \neq 0$ then, by support minimality, $\btlambda$ must be equal to $\ef_i + \ef_{n+i}$ up to sign. Assume then that $\tlambda_i\tlambda_{n+i} = 0$ for every $i \in [n]$, and let $J \coloneqq \{i \in [n] : \tlambda_{n+i} \neq 0\}$. By \Cref{rem:image of eta_I}, one has that $\btlambda = \eta_J(\blambda)$ for some $\blambda \in \{0, \pm 1\}^n$; moreover, by \Cref{lem:eta_circuits}, such $\blambda$ is a signed circuit of $M$. This finishes the proof.
\end{proof}

We remark that the unimodularity assumption is not really crucial for Lemmas \ref{lem:eta_circuits} and \ref{lem:circuits of Mpm}: one could prove similar statements by substituting ``signed circuits'' with ``circuits'' (in the toric ideal meaning). However, since in this paper we are reserving the word ``circuit'' for its matroidal meaning, we did not want to confuse the reader unnecessarily.

Before moving on, we need to introduce some notation about toric ideals naturally arising in this context.

\begin{notation} \label{notation:simple matroid toric}
Let $\cM$ be a \emph{simple} regular matroid of rank $r > 0$ on $n$ elements and let $M$ be a full-rank weakly unimodular representation of $\cM$. Then, by \Cref{thm:first properties}(iv)--(v), the lattice points of $\Pc_M$ are the columns of $\Mpm$ and the origin $\origin$. We will denote by $I_{\Pc_{M}}$ the toric ideal associated with the polytope $\Pc_{M}$, i.e., the one obtained as the kernel of the map
\begin{equation*}
    \begin{aligned}
        K[x_1, \ldots, x_n, x_{-1}, \ldots, x_{-n}, z] &\to K[t_1^{\pm 1}, \ldots, t_r^{\pm 1}, s]\\
        x_i &\mapsto \mathbf{t}^{M\ef_i}s\\
        x_{-i} &\mapsto \mathbf{t}^{-M\ef_{i}}s\\
        z &\mapsto s
    \end{aligned}
\end{equation*}
and by $I_{[M\ \!|\ \!-M]}$ the toric ideal obtained as the kernel of the map 
\begin{equation*}
    \begin{aligned}
        K[x_1, \ldots, x_n, x_{-1}, \ldots, x_{-n}] &\to K[t_1^{\pm 1}, \ldots, t_r^{\pm 1}]\\
        x_i &\mapsto \mathbf{t}^{M\ef_i}\\
        x_{-i} &\mapsto \mathbf{t}^{-M\ef_{i}},
    \end{aligned}
\end{equation*}
where $K$ is a field.
\end{notation}

We immediately obtain the following corollary of \Cref{lem:toric homogenization}:

\begin{corollary} \label{cor:homogenization of I_Mpm}
Let $\cM$ be a simple regular matroid of rank $r > 0$ and let $M \in \ZZ^{r \times n}$ be a full-rank weakly unimodular matrix representing $\cM$. Then the ideal $I_{\Pc_M}$ is the homogenization of $I_{[M\ \!|\ \!-M]}$ with respect to the variable $z$. In particular, the (irreducible) projective variety $V(I_{\Pc_M})$ is the projective closure of the (irreducible) affine variety $V(I_{[M\ \!|\ \!-M]})$.
\end{corollary}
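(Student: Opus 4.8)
The plan is to reduce Corollary~\ref{cor:homogenization of I_Mpm} to a direct application of Lemma~\ref{lem:toric homogenization}, the abstract homogenization statement already proved in the preliminaries. The key is to match up the two matrices appearing in the two toric ideals of Notation~\ref{notation:simple matroid toric} with the matrices $B$ and $B'$ of Lemma~\ref{lem:toric homogenization}. So first I would set $B \coloneqq \Mpm \in \ZZ^{r \times 2n}$, the matrix whose columns are the exponent vectors $M\ef_1, \ldots, M\ef_n, -M\ef_1, \ldots, -M\ef_n$ appearing in the definition of $I_{[M\ \!|\ \!-M]}$; by construction the toric ideal $I_B$ in $K[x_1, \ldots, x_n, x_{-1}, \ldots, x_{-n}]$ is precisely $I_{[M\ \!|\ \!-M]}$.

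Next I would observe that the map defining $I_{\Pc_M}$ sends $x_i \mapsto \mathbf{t}^{M\ef_i}s$, $x_{-i} \mapsto \mathbf{t}^{-M\ef_i}s$, $z \mapsto s$, so its associated exponent matrix is exactly the matrix $B'$ obtained from $B$ by appending a new last row of all $1$'s (recording the exponent of the extra variable $s$) and a new last column equal to $(0, \ldots, 0, 1)^T$ (recording that $z \mapsto s$, i.e.\ $z \mapsto \mathbf{t}^{\origin}s$). This is verbatim the matrix $B'$ built from $B$ in the statement of Lemma~\ref{lem:toric homogenization}, with $m = r$, the old number of columns being $2n$, and the new variable $z$ playing the role of the $(2n{+}1)$-st column. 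Hence $I_{\Pc_M} = I_{B'}$, and Lemma~\ref{lem:toric homogenization} gives $I_{B'} = I_B^{\mathrm{hom}}$, i.e.\ $I_{\Pc_M} = I_{[M\ \!|\ \!-M]}^{\mathrm{hom}}$, the homogenization with respect to $z$.

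For the second sentence of the corollary I would recall the standard fact (see e.g.\ \cite[Chapter 8]{CLO}) that homogenizing the defining ideal of an affine variety with respect to a new variable yields the defining ideal of its projective closure; since $I_{[M\ \!|\ \!-M]}$ is a toric (hence prime) ideal, both $V(I_{[M\ \!|\ \!-M]})$ and its projective closure $V(I_{\Pc_M})$ are irreducible. One small point worth spelling out is that Notation~\ref{notation:simple matroid toric} is only introduced under the hypothesis that $\cM$ is simple, so that by Theorem~\ref{thm:first properties}(iv)--(v) the lattice points of $\Pc_M$ really are the columns of $\Mpm$ together with $\origin$; this is what guarantees that the toric ideal $I_{\Pc_M}$ of the polytope coincides with the ideal $I_{B'}$ we wrote down, rather than a quotient by extra relations among repeated or interior lattice points.

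I do not expect any serious obstacle here: the content is entirely in Lemma~\ref{lem:toric homogenization}, and the corollary is just the observation that the two ideals of Notation~\ref{notation:simple matroid toric} are an instance of the $(B, B')$ pair. The only thing requiring a moment's care is the bookkeeping of which variable corresponds to which column and checking that the extra row of $B'$ is indeed all $1$'s (equivalently, that $\Pc_M$ sits in a single affine hyperplane at ``height $1$'' after appending the homogenizing coordinate) — but this is immediate from the defining map.
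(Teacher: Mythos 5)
Your proposal matches the paper's reasoning exactly: the paper presents this corollary as an immediate consequence of Lemma~\ref{lem:toric homogenization}, with $B = [M\ |\ -M]$ and $B'$ the exponent matrix of the map in Notation~\ref{notation:simple matroid toric}, using the simplicity of $\cM$ (via \Cref{thm:first properties}(iv)--(v)) to identify the lattice points of $\Pc_M$ with the columns of $B'$. Your identification of the matrices and your remark on why simplicity is needed are both correct, so this is the same argument.
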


\Cref{lem:circuits of Mpm} and \Cref{rem:unimodular toric} imply the following description for the universal Gr\"obner basis of the toric ideal $I_{[M\ \!|\ \!-M]}$ when $M$ is a full-rank weakly unimodular matrix not containing any zero column:

\begin{corollary} \label{cor:UGB for I_Mpm}
Let $M \in \ZZ^{r \times n}$ (where $0 < r \leq n$) be a full-rank weakly unimodular matrix without any zero column. Then the set of signed circuits, the universal Gr\"obner basis and the Graver basis of $I_{[M\ \!|\ \!-M]}$ all coincide and consist of the following binomials:
\begin{itemize}
    \item $x_ix_{-i} - 1$ for every $i \in [n]$;
    \item $\xf^{\eta_J(\blambda)^+} - \xf^{\eta_J(\blambda)^-}$ for every $J \subseteq [n]$, $\blambda \in \overrightarrow{\Ck}(M)$ such that $|\eta_J(\blambda)^+| \geq |\eta_J(\blambda)^-|$.
\end{itemize}
(With a slight abuse of notation, we identify those binomials that differ only up to a global sign.) 
\end{corollary}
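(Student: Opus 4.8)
The plan is to identify the three claimed coincidences and the explicit list as essentially immediate consequences of the results already assembled. First, since $M$ is full-rank weakly unimodular, so is $\Mpm$ (this is observed in the excerpt just before \Cref{notation:eta_I}); hence \Cref{rem:unimodular toric} applies to $\Mpm$ and tells us at once that $\sgncirc(\Mpm)$ is the Graver basis of $I_{[M\ \!|\ \!-M]}$, that it is a universal Gr\"obner basis, and that all three sets in question are the same set. So the only real content is to check that this common set is precisely the list of binomials displayed in the statement, i.e., to translate the vector-level description of $\sgncirc(\Mpm)$ from \Cref{lem:circuits of Mpm} into its binomial form.

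For that translation I would argue as follows. \Cref{lem:circuits of Mpm} (whose hypotheses match ours: $M$ full-rank weakly unimodular with no zero column) gives
\[
\overrightarrow{\Ck}(\Mpm) = \{\pm(\ef_i + \ef_{n+i}) : i \in [n]\} \cup \bigcup_{J \subseteq [n]}\eta_J(\overrightarrow{\Ck}(M)).
\]
Recall from \Cref{def:toric circuits} that a signed circuit $\btlambda \in \{0,\pm1\}^{2n}$ corresponds to the binomial $\xf^{\btlambda^+} - \xf^{\btlambda^-}$, and that two signed circuits differing by a global sign give the same binomial up to sign, so one picks one representative per pair. For the first family: $\ef_i + \ef_{n+i}$ has all entries nonnegative, so its positive part is $\ef_i + \ef_{n+i}$ and its negative part is $\origin$; the associated binomial is $x_i x_{-i} - 1$ (using the variable naming of \Cref{notation:simple matroid toric}, where $x_{n+i}$ is written $x_{-i}$). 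The pair $\{\ef_i + \ef_{n+i}, -(\ef_i+\ef_{n+i})\}$ collapses to this single binomial. For the second family: each $\eta_J(\blambda)$ with $\blambda \in \overrightarrow{\Ck}(M)$ yields the binomial $\xf^{\eta_J(\blambda)^+} - \xf^{\eta_J(\blambda)^-}$; since $\blambda$ and $-\blambda$ are both in $\overrightarrow{\Ck}(M)$ and $\eta_J(-\blambda) = -\eta_J(\blambda)$, the two give the same binomial up to sign, and choosing the representative with $|\eta_J(\blambda)^+| \geq |\eta_J(\blambda)^-|$ pins down exactly one per pair — this is the normalization recorded in the statement. Putting these two computations together reproduces verbatim the displayed list.

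The last point to address is that the two families do not overlap and that no further collapsing occurs beyond the global-sign identification, so that the bulleted list is an honest enumeration of $\overrightarrow{\Ck}(\Mpm)$ up to sign: a circuit of the form $\pm(\ef_i + \ef_{n+i})$ has support $\{i, n+i\}$, which meets both halves $[n]$ and $\{n+1,\dots,2n\}$, whereas any $\eta_J(\blambda)$ has, for each $i$, at most one of $\tlambda_i, \tlambda_{n+i}$ nonzero by construction of $\eta_J$ (\Cref{notation:eta_I}), so its support contains no pair $\{i,n+i\}$; hence the two families are disjoint, and distinct $(J,\blambda)$ (up to the sign on $\blambda$) give distinct signed circuits because $J$ and $\blambda$ are both recoverable from $\eta_J(\blambda)$ as noted in \Cref{rem:image of eta_I}. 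I do not anticipate a genuine obstacle here — the whole statement is a bookkeeping corollary — but the step requiring the most care is making the normalization $|\eta_J(\blambda)^+| \geq |\eta_J(\blambda)^-|$ match cleanly with the "one representative per antipodal pair" convention, and in particular handling the (harmless) degenerate possibility $|\eta_J(\blambda)^+| = |\eta_J(\blambda)^-|$, where both signs satisfy the inequality but still produce the same binomial up to sign.
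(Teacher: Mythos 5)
Your proposal is correct and follows exactly the route the paper intends: invoke \Cref{rem:unimodular toric} applied to the (still full-rank weakly unimodular) matrix $\Mpm$ to identify the three sets, and then read off the list of binomials from \Cref{lem:circuits of Mpm}. One small imprecision — the claim that distinct $(J,\blambda)$ up to sign give distinct signed circuits is not quite right (if $\lambda_i = 0$, whether $i \in J$ or not is immaterial, so $J$ is not recoverable from $\eta_J(\blambda)$ outside $\supp(\blambda)$), but this redundancy in the enumeration does not affect the corollary, which only asserts what the \emph{set} of binomials is.
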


\begin{example}
    Let $M$ be as in \Cref{ex:running} and \Cref{ex:cuts and flows}. The Graver basis of $\Mpm$ contains 37 binomials: $5$ of the form $x_ix_{-i}-1$, $16$ arising from the promotions of $x_1x_2x_3x_4-1$, $8$ from the promotions of $x_1x_2-x_5$ and another $8$ from the promotions of $x_3x_4x_5-1$. For instance, the promotions of $\blambda = (1,1,0,0,-1) \in \sgncirc(M)$ (corresponding to the binomial $x_1x_2 - x_5$) give rise to the following eight signed circuits of $\Mpm$: $x_1x_2-x_5$, $x_2-x_{-1}x_5$, $x_1 - x_{-2}x_5$, $x_1x_2x_{-5}-1$, $1 - x_{-1}x_{-2}x_5$, $x_2x_{-5}-x_{-1}$, $x_1x_{-5} - x_{-2}$, $x_{-5} - x_{-1}x_{-2}$. Technically, the statement of \Cref{cor:UGB for I_Mpm} only asks for the promotions such that $|\eta_J(\blambda)^+| \geq |\eta_J(\blambda)^-|$; however, when this is not the case, we just take $-\blambda$ instead of $\blambda$, and the global count is not affected. 
    
\end{example}

The next proposition, reminiscent of the results in \cite[Section 2]{OhsugiHibiCS}, proves the existence of a regular unimodular triangulation for $I_{\Pc_{\cM}}$ and serves as a first step towards an explicit description. For the correspondence between regular unimodular triangulations and squarefree initial ideals, we refer the reader to \cite[Chapter 8]{Stu} and \cite[Section 2.4]{UnimodularSurvey}.

\begin{proposition} \label{prop:squarefree initial ideal}
Let $\cM$ be a simple regular matroid of rank $r > 0$ and let $M \in \ZZ^{r \times n}$ be a full-rank weakly unimodular representation of $\cM$. Denote by $S$ the polynomial ring $K[x_1, \ldots, x_n, x_{-1}, \ldots, x_{-n}]$. Let $<$ be a graded monomial order of $S$ and let $<_h$ be any monomial order of $S[z]$ with the property that $\init_{<_h}f^h = \init_{<}f$ for every $f \in S$. Then the toric ideal $I_{\Pc_{M}}$ has a squarefree initial ideal with respect to $<_h$.
\end{proposition}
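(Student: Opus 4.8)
The plan is to push the statement down to the affine toric ideal $I_{[M\,|\,-M]}$, where \Cref{cor:UGB for I_Mpm} already provides an explicit universal Gr\"obner basis, and then to transport squarefreeness of the initial ideal through the homogenization $I_{\Pc_M} = I_{[M\,|\,-M]}^{\mathrm{hom}}$ furnished by \Cref{cor:homogenization of I_Mpm}. First, since $\cM$ is simple the matrix $M$ has no zero column, so \Cref{cor:UGB for I_Mpm} applies and gives a universal Gr\"obner basis $\mathcal{G}$ of $I_{[M\,|\,-M]}$ consisting of the binomials $x_ix_{-i}-1$ for $i \in [n]$ together with the binomials $\xf^{\eta_J(\blambda)^+}-\xf^{\eta_J(\blambda)^-}$ for $J \subseteq [n]$ and $\blambda \in \overrightarrow{\Ck}(M)$. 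Being universal, $\mathcal{G}$ is in particular a Gr\"obner basis with respect to the graded monomial order $<$, so $\init_<(I_{[M\,|\,-M]}) = \langle \init_<(g) : g \in \mathcal{G}\rangle$. I would then observe that each of these leading terms is squarefree: for $g = x_ix_{-i}-1$ this is forced by $<$ being graded, so $\init_<(g) = x_ix_{-i}$; for $g = \xf^{\eta_J(\blambda)^+}-\xf^{\eta_J(\blambda)^-}$ it follows from the fact that a signed circuit $\blambda$, being a $2$-flow (\Cref{def:signed circuits}), lies in $\{0,\pm 1\}^n$, together with the fact that $\eta_J$ maps $\{0,\pm 1\}^n$ into $\{0,\pm 1\}^{2n}$ (\Cref{notation:eta_I}); hence $\eta_J(\blambda)^+$ and $\eta_J(\blambda)^-$ are $\{0,1\}$-vectors and both monomials $\xf^{\eta_J(\blambda)^{\pm}}$, in particular $\init_<(g)$, are squarefree. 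Therefore $\init_<(I_{[M\,|\,-M]})$ is a squarefree monomial ideal.

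Next I would homogenize. By \Cref{cor:homogenization of I_Mpm}, $I_{\Pc_M}$ is the homogenization of $I_{[M\,|\,-M]}$ with respect to $z$; since $<$ is graded, the set $\mathcal{G}^h \coloneqq \{g^h : g \in \mathcal{G}\}$ is not merely a generating set of $I_{\Pc_M}$ (as already used in \Cref{lem:toric homogenization}) but an actual Gr\"obner basis of $I_{\Pc_M}$ with respect to $<_h$ --- this standard refinement of \cite[Theorem 8.4.4]{CLO} can be found, e.g., in \cite[Chapter 8]{Stu} or \cite[Section 2.4]{UnimodularSurvey}. By the defining property of $<_h$ one has $\init_{<_h}(g^h) = \init_<(g)$ for every $g \in \mathcal{G}$, and therefore
\[
\init_{<_h}(I_{\Pc_M}) = \langle \init_{<_h}(g^h) : g \in \mathcal{G}\rangle = \langle \init_<(g) : g \in \mathcal{G}\rangle,
\]
which we have just shown is squarefree. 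This is the assertion.

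The one point that needs care is the claim in the second step that $\mathcal{G}^h$ is a genuine Gr\"obner basis of $I_{\Pc_M}$ with respect to $<_h$, and not just a generating set, since it is precisely this that forces the leading terms $\init_{<_h}(g^h)$ to generate the initial ideal. If a self-contained verification is preferred, it can be obtained by dehomogenizing: for homogeneous $f \in I_{\Pc_M}$, writing $z^k$ for the largest power of $z$ dividing $f$, one has $f = z^k (f|_{z=1})^h$ with $f|_{z=1} \in I_{[M\,|\,-M]}$, so $\init_{<_h}(f) = z^k\,\init_{<_h}\big((f|_{z=1})^h\big) = z^k\,\init_<\big(f|_{z=1}\big)$, which is divisible by some $\init_<(g)$ because $\mathcal{G}$ is a Gr\"obner basis of $I_{[M\,|\,-M]}$; the general case reduces to the homogeneous one since $I_{\Pc_M}$ is a homogeneous ideal and the $<_h$-leading monomial of any polynomial is the $<_h$-leading monomial of one of its homogeneous components. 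Everything else is routine bookkeeping.
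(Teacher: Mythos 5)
Your argument follows the same route as the paper's: identify $I_{\Pc_M}$ as the $z$-homogenization of $I_{[M\,|\,-M]}$ via \Cref{cor:homogenization of I_Mpm}, homogenize the universal Gr\"obner basis of \Cref{cor:UGB for I_Mpm} using the CLO theorem on graded homogenization, and observe that signed circuits have $\{0,\pm 1\}$ entries so the resulting leading terms are squarefree. Your additional paragraph spelling out why $\mathcal{G}^h$ is a Gr\"obner basis (not merely a generating set) is a correct elaboration of a step the paper delegates to \cite[Theorem 8.4.4]{CLO}, but it is the same proof.
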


\begin{remark}
    A concrete choice for $<_h$ as in \Cref{prop:squarefree initial ideal} (and later \Cref{thm:gb}) is any degrevlex order of $S[z]$ such that $z <_h v$ for every variable $v$ in $S$.
\end{remark}

\begin{proof}[Proof of \Cref{prop:squarefree initial ideal}]
By \Cref{cor:homogenization of I_Mpm}, the toric ideal $I_{\Pc_M}$ is the homogenization of $I_{[M\ \!|\ \!-M]}$ with respect to the variable $z$. In order to find a Gr\"obner basis for $I_{\Pc_M}$, by \cite[Theorem 8.4.4]{CLO} it is then enough to homogenize a set of polynomials forming a Gr\"obner basis for $I_{[M\ \!|\ \!-M]}$ with respect to a graded monomial order. The universal Gr\"obner basis of $I_{[M\ \!|\ \!-M]}$ is described in \Cref{cor:UGB for I_Mpm}; since by definition signed circuits have coefficients in $\{0, \pm 1\}$, the claim follows.
\end{proof}

We are finally able to generalize the Gr\"obner basis description obtained by Higashitani, Jochemko and Micha{\l}ek for the usual symmetric edge polytopes.

\begin{theorem} \label{thm:gb}
    Let $\cM$ be a simple regular matroid and let $M \in \ZZ^{r \times n}$ be a full-rank weakly unimodular representation of $\cM$. Let $S[z]$, $<$ and $<_h$ be as in \Cref{prop:squarefree initial ideal}. Then the polynomials
    \begin{enumerate}
        \item[(i)] $x_ix_{-i} - z^2$ for every $i \in [n]$;
        \item[(ii)] $\xf^{\eta_J(\blambda)^+} - \xf^{\eta_J(\blambda)^-}z$ for every $J \subseteq [n]$, $\blambda \in \overrightarrow{\Ck}(M)$ such that $|\eta_J(\blambda)^+| = |\eta_J(\blambda)^-|+1$;
        \item[(iii)] $\xf^{\eta_J(\blambda)^+} - \xf^{\eta_J(\blambda)^-}$ for every $J \subseteq [n]$, $\blambda \in \overrightarrow{\Ck}(M)$ such that $|\eta_J(\blambda)^+| = |\eta_J(\blambda)^-|$
    \end{enumerate}
    form a Gr\"obner basis for $I_{\Pc_M}$ with respect to $<_h$.
\end{theorem}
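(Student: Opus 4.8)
The plan is to piggyback on the homogenization machinery already in place. By Corollary~\ref{cor:homogenization of I_Mpm}, $I_{\Pc_M}$ is the homogenization of $I_{[M\,|\,-M]}$ with respect to $z$, and by \cite[Theorem 8.4.4]{CLO} — exactly as in the proof of Proposition~\ref{prop:squarefree initial ideal} — homogenizing any Gr\"obner basis of $I_{[M\,|\,-M]}$ taken with respect to a graded monomial order yields a Gr\"obner basis of $I_{\Pc_M}$ with respect to $<_h$. First I would apply this to the universal Gr\"obner basis of $I_{[M\,|\,-M]}$ from Corollary~\ref{cor:UGB for I_Mpm}: homogenizing it shows that the set $\mathcal{G}$ consisting of all $x_ix_{-i}-z^2$ ($i\in[n]$) together with all binomials $\xf^{\eta_J(\blambda)^+}-\xf^{\eta_J(\blambda)^-}z^{k(J,\blambda)}$ (for $J\subseteq[n]$, $\blambda\in\overrightarrow{\Ck}(M)$, with $k(J,\blambda):=|\eta_J(\blambda)^+|-|\eta_J(\blambda)^-|\ge 0$ and $\blambda$ oriented accordingly) is a $<_h$-Gr\"obner basis of $I_{\Pc_M}$. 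The listed polynomials (i)--(iii) are precisely the members of $\mathcal{G}$ with $k(J,\blambda)\le 1$ — family (iii) being the case $k=0$, family (ii) the case $k=1$ — together with the binomials $x_ix_{-i}-z^2$ of (i). Denoting this sub-collection by $\mathcal{G}'$, we get $\mathcal{G}'\subseteq\mathcal{G}\subseteq I_{\Pc_M}$, and $\mathcal{G}\setminus\mathcal{G}'$ consists exactly of the homogenized promotions with $k(J,\blambda)\ge 2$.

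Since $\mathcal{G}$ is already a Gr\"obner basis, it suffices to prove $\init_{<_h}(g)\in\langle\init_{<_h}(g'):g'\in\mathcal{G}'\rangle$ for every $g\in\mathcal{G}$, which is vacuous for $g\in\mathcal{G}'$. So fix $g=\xf^{\eta_J(\blambda)^+}-\xf^{\eta_J(\blambda)^-}z^{k}$ with $k=k(J,\blambda)\ge 2$. Because $<$ is graded and $|\eta_J(\blambda)^+|>|\eta_J(\blambda)^-|$, the defining property of $<_h$ gives $\init_{<_h}(g)=\xf^{\eta_J(\blambda)^+}$ (and similarly $\init_{<_h}(x_ix_{-i}-z^2)=x_ix_{-i}$), so the task reduces to finding $g'\in\mathcal{G}'$ whose leading monomial divides $\xf^{\eta_J(\blambda)^+}$. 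I would do this by a ``demotion'' descent on $J$: since $|\eta_J(\blambda)^+|\ge 2>0$, the vector $\eta_J(\blambda)$ has a coordinate equal to $+1$, occurring at a position $i\in[n]$ (with $i\notin J$, $\lambda_i=1$) or a position $n+i$ (with $i\in J$, $\lambda_i=-1$); replacing $J$ by $J\triangle\{i\}$ moves that unit coordinate to its partner position with opposite sign, so the new promotion satisfies $\xf^{\eta_{J'}(\blambda)^+}=\xf^{\eta_J(\blambda)^+}/v$ for a single variable $v$ and $k(J',\blambda)=k-2$. Iterating $\lfloor k/2\rfloor$ times produces $J''$ with $k(J'',\blambda)\in\{0,1\}$ and $\xf^{\eta_{J''}(\blambda)^+}\mid\xf^{\eta_J(\blambda)^+}$, whose homogenized binomial $g''$ lies in $\mathcal{G}'$ (type (ii) if $k(J'',\blambda)=1$, type (iii) if $k(J'',\blambda)=0$).

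When $k(J'',\blambda)=1$ the monomial $\xf^{\eta_{J''}(\blambda)^+}$ has strictly larger $S$-degree than the other term of $g''$, so $\init_{<_h}(g'')=\xf^{\eta_{J''}(\blambda)^+}\mid\xf^{\eta_J(\blambda)^+}$ and we are done; this in particular disposes of every odd $k$. The hard part will be the case $k(J'',\blambda)=0$: then the two terms of $g''$ have equal degree, so which one is $\init_{<_h}(g'')$ genuinely depends on $<_h$, and one has to check — using the explicit degrevlex order of the Remark after Proposition~\ref{prop:squarefree initial ideal} and steering the descent by always demoting a positive coordinate whose partner position is small in the variable order — that the demotions can be arranged so that $\init_{<_h}(g'')=\xf^{\eta_{J''}(\blambda)^+}$. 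Granting this last bit of monomial-order bookkeeping, we obtain $\xf^{\eta_J(\blambda)^+}\in\langle\init_{<_h}(\mathcal{G}')\rangle$ in all cases, hence $\langle\init_{<_h}(\mathcal{G}')\rangle=\init_{<_h}(I_{\Pc_M})$, so the polynomials in (i)--(iii) form a Gr\"obner basis of $I_{\Pc_M}$ with respect to $<_h$, as claimed.
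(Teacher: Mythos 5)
Your proposal follows essentially the same strategy as the paper's own proof: reduce to showing that the homogenized promotions with $k(J,\blambda)\ge 2$ are redundant, via a demotion descent that decreases $k$ by two at each step while keeping a divisor of $\xf^{\eta_J(\blambda)^+}$ in hand. The paper packages each demotion as the explicit algebraic identity
\[\xf^{\eta_J(\blambda)^+} - \xf^{\eta_J(\blambda)^-}z^{k} = x_{j}\,\bigl(\xf^{\eta_{J\cup\{j\}}(\blambda)^+} - \xf^{\eta_{J\cup\{j\}}(\blambda)^-}z^{k-2}\bigr) + \xf^{\eta_J(\blambda)^-}z^{k-2}\bigl(x_jx_{-j}-z^2\bigr),\]
and iterates; you argue directly at the level of leading monomials. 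These are two faces of the same argument.

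The place where you diverge from the paper is instructive. You correctly flag that the even-$k$ endgame is the delicate point: when $k(J'',\blambda)=0$ the two terms of the resulting type (iii) binomial $g''$ have equal $S$-degree, so its $<_h$-initial term genuinely depends on the order and may well be $\xf^{\eta_{J''}(\blambda)^-}$ rather than $\xf^{\eta_{J''}(\blambda)^+}$. This is not a phantom worry — the paper's own \Cref{ex:triangulation} contains type (iii) generators such as $-\underline{x_{-2}x_{-3}}+x_1x_4$ whose leading monomial is the ``minus'' side, and the divisor $x_{-2}x_{-3}$ does not divide the monomial $\xf^{\eta_J(\blambda)^+}$ you were trying to cover. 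Notably, the paper's proof does not explicitly address this either; it picks an arbitrary $j$ at each step and asserts that iterating yields the claim, whereas a careless choice of the last demotion really does produce leading-term cancellation in the displayed identity. So you have identified a genuine subtlety.

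What you have not done is close it: ``Granting this last bit of monomial-order bookkeeping'' leaves the crucial step unproved, and as written your argument is therefore incomplete. The bookkeeping is real but short. For a degrevlex order with variable order $x_1>x_{-1}>x_2>x_{-2}>\cdots>x_n>x_{-n}>z$, one checks that demoting (at the final step, from $k=2$ to $k=0$) the positive coordinate whose variable is last in the order always forces the smallest variable appearing in the resulting balanced binomial to lie on the negative side, so that by the revlex rule $\init_{<_h}(g'')=\xf^{\eta_{J''}(\blambda)^+}$. (The key facts are that the partner of a variable is adjacent to it in the order and that $x_i$ and $x_{-i}$ never both occur in $\supp\eta_{J''}(\blambda)$.) With that lemma in place your descent works, and your proof matches the paper's in substance; without it, both your write-up and the paper's leave a gap in exactly the same place.
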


\begin{remark}
The binomials of types (ii) and (iii) in \Cref{thm:gb} come from considering those signed circuits of $\Mpm$ where $1$'s and $-1$'s are ``as balanced as possible''; this is exactly what happens for classical symmetric edge polytopes, recalling that both orientations for every edge of the original undirected graph are available in that setting. 
\end{remark}

\begin{proof}[Proof of \Cref{thm:gb}]
    By the proof of \Cref{prop:squarefree initial ideal}, we know that homogenizing the polynomials of \Cref{cor:UGB for I_Mpm} with respect to the variable $z$ yields a Gr\"obner basis for $I_{\Pc_{M}}$ with respect to $<_h$. 
    For every $i \in [n]$, the homogenization of $x_ix_{-i} - 1$ gives us one of the polynomials of type (i). Now let $J \subseteq [n]$, $\blambda \in \overrightarrow{\Ck}(M)$ and set $k \coloneqq |\eta_J(\blambda)^+| - |\eta_J(\blambda)^-|$. After possibly swapping $\blambda$ with $-\blambda$, we can assume without loss of generality that $k \geq 0$.
    
    If $k = 0$ or $k = 1$, the homogenization of $\xf^{\eta_J(\blambda)^+} - \xf^{\eta_J(\blambda)^-}$ yields one of the binomials of type (iii) or (ii) in the list. It is then enough to show that the homogenization of $\xf^{\eta_J(\blambda)^+} - \xf^{\eta_J(\blambda)^-}$ is redundant when $k \geq 2$. Consider such a polynomial. There exists $j \in [n]$ such that either $\eta_J(\blambda)_j = 1$ or $\eta_J(\blambda)_{n+j} = 1$. If $\eta_J(\blambda)_j = 1$, one has that \[x_j \cdot \xf^{\eta_{J \cup \{j\}}(\blambda)^+} = \xf^{\eta_J(\blambda)^+} \ \text{ and } \ x_{-j} \cdot \xf^{\eta_{J}(\blambda)^-} = \xf^{\eta_{J \cup \{j\}}(\blambda)^-}\] and we can write 
    
    \[\begin{split}\xf^{\eta_J(\blambda)^+} - \xf^{\eta_J(\blambda)^-}z^k &= \xf^{\eta_{J}(\blambda)^+} - \xf^{\eta_{J}(\blambda)^-}z^k + x_jx_{-j}\xf^{\eta_{J}(\blambda)^-}z^{k-2} - x_jx_{-j}\xf^{\eta_{J}(\blambda)^-}z^{k-2}  \\
    &= x_{j} \cdot \xf^{\eta_{J\cup \{j\}}(\blambda)^+} - \xf^{\eta_{J}(\blambda)^-}z^k + x_jx_{-j}\xf^{\eta_{J}(\blambda)^-}z^{k-2} - x_j \cdot \xf^{\eta_{J \cup \{j\}}(\blambda)^-}z^{k-2}\\
    &= x_{j} \cdot (\xf^{\eta_{J\cup \{j\}}(\blambda)^+} - \xf^{\eta_{J \cup \{j\}}(\blambda)^-}z^{k-2}) + \xf^{\eta_J(\blambda)^-}z^{k-2}(x_jx_{-j}-z^2).\end{split}\]
    
    If instead $\eta_J(\blambda)_{n+j} = 1$, one has that \[x_{-j} \cdot \xf^{\eta_{J \setminus \{j\}}(\blambda)^+} = \xf^{\eta_J(\blambda)^+} \ \text{ and } \ x_{j} \cdot \xf^{\eta_{J}(\blambda)^-} = \xf^{\eta_{J \setminus \{j\}}(\blambda)^-}\] and an analogous computation leads to
    \[\xf^{\eta_J(\blambda)^+} - \xf^{\eta_J(\blambda)^-}z^k = x_{-j} \cdot (\xf^{\eta_{J\setminus \{j\}}(\blambda)^+} - \xf^{\eta_{J \setminus \{j\}}(\blambda)^-}z^{k-2}) + \xf^{\eta_{J}(\blambda)^-}z^{k-2}(x_jx_{-j}-z^2).\]
    Iterating this procedure as many times as possible yields the claim.
\end{proof}

\begin{example} \label{ex:triangulation}
    Let $M$ be as in \Cref{ex:running} and \Cref{ex:cuts and flows}, and pick as a term order the degree reverse lexicographic order with $x_{1} > x_{-1} > x_{2} > x_{-2} > x_{3} > x_{-3} > x_{4} > x_{-4} > x_{5} > x_{-5} > z$. Then, by \Cref{thm:gb}, there is a Gr\"obner basis for $I_{\Pc_M}$ consisting of the following binomials (where we underline the leading term):
    \begin{itemize}
    \item $\underline{x_1x_{-1}}-z^2$, $\underline{x_2x_{-2}}-z^2$, $\underline{x_3x_{-3}}-z^2$, $\underline{x_4x_{-4}}-z^2$, $\underline{x_5x_{-5}}-z^2$ 
    \item $\underline{x_1x_2}-x_5z$, $\underline{x_{-1}x_{-2}}-x_{-5}z$, $\underline{x_{-1}x_5}-x_2z$, $\underline{x_1x_{-5}}-x_{-2}z$, $\underline{x_{-2}x_5}-x_1z$, $\underline{x_2x_{-5}}-x_{-1}z$
    \item $\underline{x_3x_4}-x_{-5}z$, $\underline{x_{-3}x_{-4}}-x_5z$, $\underline{x_3x_5}-x_{-4}z$, $\underline{x_{-3}x_{-5}}-x_4z$, $\underline{x_4x_5}-x_{-3}z$, $\underline{x_{-4}x_{-5}}-x_3z$
    \item $\underline{x_1x_2}-x_{-3}x_{-4}$, $\underline{x_{-1}x_{-2}}-x_3x_4$, $\underline{x_1x_3}-x_{-2}x_{-4}$, $\underline{x_{-1}x_{-3}}-x_2x_4$, $-\underline{x_{-2}x_{-3}}+x_1x_4$, $-\underline{x_2x_3}+x_{-1}x_{-4}$.
    \end{itemize}
    Note that this Gr\"obner basis is not reduced, as the monomials $x_1x_2$ and $x_{-1}x_{-2}$ are both featured twice as the leading term of a binomial. The associated triangulation has sixteen facets and is shown in \Cref{fig:triangulation}.
    \begin{figure}[h!] 
        \includegraphics[scale=0.5]{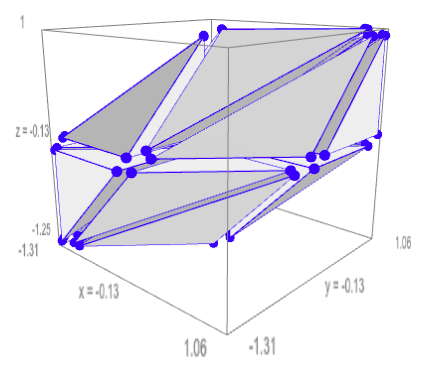}
        \caption{The triangulation described in \Cref{ex:triangulation}. Plot generated by SageMath \cite{sagemath}.}
        \label{fig:triangulation}
    \end{figure}
\end{example}

Finally, the $\gamma$-polynomial of a symmetric edge polytope has been the object of much recent work after Ohsugi and Tsuchiya conjectured the nonnegativity of its coefficients in \cite{OhsugiTsuchiya}. We wish to conclude the present article by extending to the matroidal setting a characterization of $\gamma_1$ which appeared independently in \cite{DJKKV} and \cite{KaTo}.

\begin{corollary}
Let $\cM$ be a simple regular matroid of positive rank. Then $\gamma_1(\Pc_{\cM}) = 2 \cdot \mathrm{rk}(\cM^*)$. In particular, $\gamma_1(\Pc_{\cM})$ is nonnegative.
\end{corollary}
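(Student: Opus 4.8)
The plan is to compute $\gamma_1(\Pc_{\cM})$ via the $h^*$-polynomial coming from the regular unimodular triangulation of \Cref{thm:gb}, exactly mirroring the strategy used for classical symmetric edge polytopes in \cite{DJKKV, KaTo}. Recall that for a reflexive polytope of dimension $r$ admitting a regular unimodular triangulation, the $h^*$-vector equals the $h$-vector of the triangulation, and since $\Pc_{\cM}$ is reflexive its $h^*$-polynomial is palindromic, hence can be written in the $\gamma$-basis as $h^*(t) = \sum_i \gamma_i \, t^i(1+t)^{r-2i}$. Comparing coefficients in low degree gives $\gamma_1 = h^*_1 - r = h^*_2 - (h^*_1 + 1)$ type identities; concretely $h^*_1 = |\Pc_{\cM} \cap \ZZ^r| - (r+1) = 2n - (r+1)$ where $n$ is the number of elements of $\cM$ (using \Cref{thm:first properties}(iv)--(v): the lattice points are the $2n$ vertices plus the origin), and $\gamma_1 = h^*_1 - r$ does \emph{not} hold in general — rather $\gamma_1 = h^*_2 - \binom{h^*_1+1}{2} + \ldots$, so the real work is computing $h^*_2$.

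**Key steps.** First I would fix the initial ideal $\init_{<_h} I_{\Pc_M}$ produced by \Cref{thm:gb} and identify the Stanley--Reisner complex of its radical: its faces are the squarefree monomials not divisible by any leading term. The leading terms of type (i) are $x_i x_{-i}$, those of type (ii)/(iii) are the $\xf^{\eta_J(\blambda)^+}$ for balanced promotions of signed circuits. Then $h^*_2$ counts (minimal nonfaces are edges, so the complex is flag) the number of edges of the complex, i.e.\ pairs of variables $\{x_a, x_b\}$ (with $a, b \in \pm[n]$) that do \emph{not} form a leading monomial — equivalently, pairs that \emph{are} a face — and one extracts $h^*_2$ from the $f$-vector via $h^*_2 = \binom{f_0 - r}{2} + \ldots$; more efficiently, I would use $h^*_2 = f_1 - (r-1) f_0 + \binom{r}{2}$ combined with the reflexivity/palindromicity constraint. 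Next I would count the ``forbidden'' pairs: $x_i x_{-i}$ contributes $n$ of them, and each balanced signed circuit of $M$ of support size $2$ — i.e.\ each circuit of $\cM$ of size $2$, of which there are none since $\cM$ is simple — plus, crucially, the type (ii) binomials $\xf^{\eta_J(\blambda)^+} - \xf^{\eta_J(\blambda)^-}z$ with $|\eta_J(\blambda)^+| = 2$: these come from signed circuits $\blambda$ of $M$ with $|\supp(\blambda)| = 3$, i.e.\ \emph{triangles} (3-element circuits) of $\cM$. So the count of quadratic leading terms, and hence the correction to $\gamma_1$, is governed by the number of $3$-element circuits of $\cM$. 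Finally I would translate ``number of $3$-circuits of $\cM$'' into matroid-theoretic data involving $\cM^*$: by matroid duality and the rank formula, one expects the net count to collapse to $2 \cdot \mathrm{rk}(\cM^*) = 2(n - r)$, which I would verify either by a direct bijective argument or by induction on $n - r$ via deletion/contraction, checking the base case $\cM = \mathcal{U}_{r,r}$ (cross-polytope, $\mathrm{rk}(\cM^*) = 0$, $\gamma_1 = 0$) and the coloop/coindependent step.

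**Main obstacle.** The genuinely delicate step is the combinatorial bookkeeping that turns the count of quadratic generators of $\init_{<_h} I_{\Pc_M}$ into the clean number $2(n-r)$: one must correctly handle the promotions $\eta_J$ (so that a single signed circuit of support size $3$ in $M$ can contribute several quadratic leading monomials in $\Mpm$, but only those consistent with the chosen graded order $<_h$), and one must be careful that $h^*_2$ is \emph{not} simply the number of non-edges but is corrected by the ambient dimension $r$. I expect the cleanest route is to avoid re-deriving the $h^*_2$ formula from scratch and instead invoke the identity $\gamma_1(\Pc) = h^*_1(\Pc) - \dim \Pc$ valid when $h^*_2 \geq h^*_1$ fails — no: rather to cite the general fact that for a reflexive polytope $\gamma_1 = h^*_2 - \binom{h^*_1}{2} - h^*_1$ would be wrong too; the honest statement is $h^*(t) = (1+t)^r + \gamma_1 t(1+t)^{r-2} + \cdots$ forces $h^*_1 = r$ whenever the polytope is the cross-polytope and more generally $\gamma_1 = h^*_1 - r$ \emph{is} correct since $h^*_2 = \binom{r}{2} + \gamma_1(r-2) + \gamma_1$... — and here I would simply recompute $h^*_1 = 2n - r - 1$ from the lattice-point count, which already does \emph{not} match $2(n-r)$, signaling that $\gamma_1 = h^*_1 - r$ is false and confirming that the $h^*_2$ computation is unavoidable. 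Thus the crux is an exact enumeration of the degree-two part of the Stanley--Reisner ideal, and reconciling it with $\mathrm{rk}(\cM^*)$; I would carry this out by the deletion--contraction induction sketched above, as that isolates the matroidal content most transparently.
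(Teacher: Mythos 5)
Your initial instinct was exactly right, and the rest of the proposal is a long detour caused by a single arithmetic slip. For a reflexive polytope $P$ of dimension $r$, the $h^*$-polynomial has degree $r$ and is palindromic, so writing $h^*(t) = \sum_i \gamma_i t^i(1+t)^{r-2i}$ and comparing coefficients in degree $1$ gives $h^*_1 = r + \gamma_1$, i.e.\ $\gamma_1 = h^*_1 - r$ \emph{unconditionally}. You state this identity, then talk yourself out of it. The reason you talk yourself out of it is an off-by-one error in $h^*_1$: since $\cM$ is simple of rank $r$ on $n$ elements, \Cref{thm:first properties}(iv)--(v) give $|\Pc_{\cM} \cap \ZZ^r| = 2n + 1$ (the $2n$ vertices plus the origin), so $h^*_1 = |\Pc_{\cM} \cap \ZZ^r| - (r+1) = (2n+1) - (r+1) = 2n - r$, not $2n - r - 1$. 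With the correct value, $\gamma_1 = (2n - r) - r = 2(n-r) = 2\cdot\mathrm{rk}(\cM^*)$ falls out immediately, and there is no discrepancy signalling that $h^*_2$ must be computed.

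Everything in your proposal about enumerating quadratic leading terms of $\init_{<_h} I_{\Pc_M}$, counting $3$-circuits of $\cM$, flagness of the Stanley--Reisner complex, and deletion--contraction induction is therefore unnecessary for this statement (it would be the kind of work needed for $\gamma_2$, as in \cite{DJKKV}, but not for $\gamma_1$). The paper's actual proof is precisely the one-line computation you abandoned, phrased via the triangulation: $\Delta_<$ is a unimodular triangulation of the boundary of $\Pc_{\cM}$, a simplicial $(r-1)$-sphere with $f_0(\Delta_<) = 2|E| = 2n$ vertices, so $h^*_1(\Pc_{\cM}) = h_1(\Delta_<) = f_0(\Delta_<) - r = 2n - r$, whence $\gamma_1 = 2n - 2r = 2\cdot\mathrm{rk}(\cM^*)$. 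Also, the intermediate formulas you float such as ``$\gamma_1 = h^*_2 - \binom{h^*_1+1}{2} + \ldots$'' are not correct identities and should be discarded; the clean relation is simply $\gamma_1 = h^*_1 - \dim P$ for reflexive $P$.
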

\begin{proof}

In what follows, let $E$ be the ground set of the matroid $\cM$. By \Cref{prop:squarefree initial ideal}, the polytope $\Pc_{\cM}$ admits a (regular) unimodular triangulation $\Delta_{<}$, and hence the $h^*$-polynomial of $\Pc_{\cM}$ and the $h$-polynomial of $\Delta_{<}$ coincide. Then
    \begin{align*}
        \gamma_1(\Pc_{\cM}) &= h^\ast_1(\Pc_{\cM})- \mathrm{rk}(\cM) & \text{by \Cref{thm:first properties}(ii)}\\
        &= h_1(\Delta_{<})-\mathrm{rk}(\cM) & \text{by \Cref{prop:squarefree initial ideal}}\\
        &= (f_0(\Delta_{<})-\mathrm{rk}(\cM)) - \mathrm{rk}(\cM) & \text{by definition of $h_1$}\\&=2 \cdot (|E|-\mathrm{rk}(\cM)) & \text{since $\Delta_{<}$ has $2 \cdot |E|$ vertices}\\&=2 \cdot \mathrm{rk}(\cM^*).
    \end{align*}    
     
\end{proof}

\section{Future directions} \label{sec:future}
We conclude the present paper with some questions.
\begin{question}
Are generalized symmetric edge polytopes $\gamma$-positive? A positive answer would settle the conjecture by Ohsugi and Tsuchiya on symmetric edge polytopes \cite{OhsugiTsuchiya}.
More modestly, one could try to prove or disprove that $\gamma_2$ is always nonnegative, analogously to the classical symmetric edge polytope case treated in \cite{DJKKV}.
\end{question}

\begin{question}
How do properties of the generalized symmetric edge polytope (e.g., its $h^*$-vector) change under operations on the associated matroid? Is there any way to use Seymour's characterization of regular matroids via $1-$, $2-$ and $3-$sums \cite{Seymour}?
\end{question}

\begin{question}
Can one determine a formula for the $h^*$-vector of generalized symmetric edge polytopes analogous to the one found by K\'alm\'an and T\'othm\'er\'esz in \cite{KT_hstar}?
\end{question}

\begin{question}
Are there ``nice'' classes of regular matroids for which the $h^\ast$-polynomial of the associated generalized symmetric edge polytope is real-rooted?
\end{question}

\begin{question}
To which extent can the formulas from \cite[Propositions 64 and 65]{DDM} be generalized to the matroidal setting?
\end{question}

\subsection*{Acknowledgements}
We wish to thank Emanuele Delucchi, Akihiro Higashitani, Hidefumi Ohsugi and Lorenzo Venturello for useful comments and discussions at various stages of this project. We are grateful to Matthias Walter for helping us with using the Combinatorial Matrix Recognition Library (currently available at \url{http://discopt.github.io/cmr/}); in particular, in our computations we made use of the total unimodularity test described in \cite{WalterT13}. We also acknowledge the use of the Macaulay2 \cite{M2} package \texttt{Matroids} \cite{Chen} by Justin Chen. Finally, we thank Marco Caselli and Lorenzo Venturello for their help with coding in SageMath \cite{sagemath}.

\bibliographystyle{alpha} 
\bibliography{bibliography}
\end{document}